\titleformat*{\section}{\bfseries\fontsize{13}{20}\selectfont}
\newtheorem{theorem}{Theorem}[section]
\newtheorem*{Vitali-Hahn-Saks Theorem}{Vitali-Hahn-Saks Theorem}
\newtheorem*{Definition 2.6'}{Definition 2.6'}
\newtheorem{proposition}[theorem]{Proposition}
\newtheorem{corollary}[theorem]{Corollary}
\newtheorem{lemma}[theorem]{Lemma}
\newtheorem{definition}[theorem]{Definition}
\newtheorem{remark}[theorem]{Remark}
\newtheorem{example}[theorem]{Example}
\numberwithin{equation}{section}
\newcommand\blfootnote[1]{%
  \begingroup
  \renewcommand\thefootnote{}\footnote{#1}%
  \addtocounter{footnote}{-1}%
  \endgroup
}
\title{\large \textbf{\uppercase{Sequential convergence on the space of Borel measures}}}                  
\author{\uppercase{Liangang Ma}}
\date{}
\begin{document}

\maketitle

\blfootnote{The work is supported by ZR2019QA003 from SPNSF and 12001056 from NSFC.}

\begin{abstract}
 We study equivalent descriptions of the vague, weak, setwise and total-variation (TV) convergence of sequences of Borel measures on metrizable and non-metrizable topological spaces in this work. On metrizable spaces, we give some equivalent conditions on the vague convergence of sequences of measures following Kallenberg, and some equivalent conditions on the TV convergence of sequences of measures following Feinberg-Kasyanov-Zgurovsky. There is usually some hierarchy structure on the equivalent descriptions of convergence in different modes, but not always. On non-metrizable spaces, we give examples to show that these conditions are seldom enough to guarantee any convergence of sequences of measures. There are some remarks on the attainability of the TV distance and more modes of sequential convergence at the end of the work.

\end{abstract}

\section{Introduction}

Let $X$ be a topological space with its Borel $\sigma$-algebra $\mathscr{B}$. Consider the collection $\mathcal{\tilde{M}}(X)$ of all the Borel measures on $(X, \mathscr{B})$. When we consider the regularity of some mapping
\begin{center}
$f: \mathcal{\tilde{M}}(X)\rightarrow Y$
\end{center}
with $Y$ being a topological space, some topology or even metric is necessary on the space  $\mathcal{\tilde{M}}(X)$ of Borel measures. Various notions of topology and metric grow out of different situations on the space  $\mathcal{\tilde{M}}(X)$ in due course to deal with the corresponding concerns of regularity. In those topology and metric endowed on $\mathcal{\tilde{M}}(X)$, it has been recognized that the vague, weak, setwise topology as well as the total-variation (TV) metric are highlighted notions on the topological and metric description of $\mathcal{\tilde{M}}(X)$ in various circumstances, refer to \cite{Kal, GR, Wul}. From the viewpoint of sequential convergence (\cite{Cla}) in $\mathcal{\tilde{M}}(X)$, they induce corresponding notions of convergence of  sequences of Borel measures. One is recommended to refer to \cite{Fol, Kallen2, Kallen3, Kle} for vague convergence, to \cite{Bil1, Bil2, HL1, Kallen4, Kallen5, Kle, Las2} for weak convergence, to \cite{FKL, FKZ1, FKZ2, Las1, Las3} for setwise and TV convergence of sequences of measures in $\mathcal{\tilde{M}}(X)$. The focus of this work is on the sequential convergence of measures instead of the topology on $\mathcal{\tilde{M}}(X)$, however, they are closely related ways on describing regularity of $\mathcal{\tilde{M}}(X)$.

In this work we mean to study the necessary or sufficient conditions on the vague, weak, setwise as well as the TV convergence of sequences of  measures in $\mathcal{\tilde{M}}(X)$. We tend to give equivalent conditions (that is, necessary and sufficient) on the sequential convergence in these modes, however, some conditions being merely necessary or sufficient seem to be more interesting in due courses. We collect equivalent (or merely necessary or sufficient) descriptions of the vague, weak, setwise as well as the total-variation (TV) convergence of sequences of  measures already established, while we formulate new descriptions for further interests and study relationships between these descriptions.     

Note that until now most results on vague, weak, setwise and TV convergence of sequences of measures are set on $\mathcal{\tilde{M}}(X)$ with the ambient space $X$ being metrizable. This is reasonable from the viewpoint of actual applications of these results in due courses. See for example the application of vague convergence on non-interactive particle systems by O. Kallenberg on Euclidean spaces in \cite{Kallen1}, application of weak convergence on interactive particle systems by J. T. Cox, A. Klenke and E. A. Perkins on locally compact Polish spaces in \cite{CK, CKP}, application of the weak, setwise or TV convergence in the Markov decision processes by  E. Feinberg, P. Kasyanov and M. Zgurovsky \cite{FK, FKZ1, FKZ3, FKZ4}, in the Markov chains by O. Hern\'andez-Lerma and J. Lasserre \cite{HL1, HL3} on metric spaces, application of the setwise convergence in iterated function systems by Ma \cite{Ma1, Ma2} on metric spaces.  However, it is interesting to ask what will happen when the ambient space $X$ losts metrizability (although these questions are not always well posed). We provide examples to show that popular necessary or sufficient descriptions are seldom true for any sequential convergence of measures on non-metrizable spaces.

Another concern for us is on the scope of measures to which these results apply. Let 
\begin{center}
$\mathcal{\hat{M}}(X)=\{\nu\in\mathcal{\tilde{M}}(X): \nu(X)<\infty\}$
\end{center}   
be the collection of all the finite Borel measures on $X$ and
\begin{center}
$\mathcal{M}(X)=\{\nu\in\mathcal{\hat{M}}(X): \nu(X)=1\}$
\end{center}   
be the collection of all the Borel probability measures on $X$. We mainly confine our attention to finite measures in $\mathcal{\hat{M}}(X)$ in this work, while many results on it are shared on $\mathcal{M}(X)$. Some results extend onto $\mathcal{\tilde{M}}(X)$, but not always.    

The organization of the paper is as following. In Section \ref{sec2} we introduce notions of the vague, weak, setwise and TV convergence of sequences of measures in $\mathcal{\tilde{M}}(X)$, between which we present results on the equivalent descriptions of the vague and TV convergence-Theorem \ref{thm6} and \ref{thm3}. Based on Theorem \ref{thm5} in this section one can construct examples that equivalent descriptions of these convergences fail. Section \ref{sec3} to \ref{sec6} are devoted to (equivalent, necessary or sufficient) descriptions of the vague, setwise and TV convergence of sequences of measures in $\mathcal{\hat{M}}(X)$ (some results or examples are on $\mathcal{\tilde{M}}(X)$ or $\mathcal{M}(X)$) respectively. Cases that the ambient space $X$ being metrizable or non-metrizable are both in our consideration throughout the work. In the last section we first point out some distinctions on the equivalent descriptions of the TV metric by their attainability, then give some heuristic definitions on more modes of sequential convergence of measures.

\section{The vague, weak, setwise and TV convergence of sequences of measures in $\mathcal{\tilde{M}}(X)$}\label{sec2}

In this section we introduce the four kind of sequential convergence on the space of Borel measures $\mathcal{\tilde{M}}(X)$. These notions differ from one another in strength, such that one can expect better properties on the sequence of measures or their limit measures under stronger convergent mode, at the cost of more difficulty in guaranteeing the stronger convergence. One can subtly choose the appropriate level of convergence mode, this is sometimes crucial in dealing with its problems. There are basically two points of views to describe these notions of convergence of sequences of measures. One is from their behaviours on measurable sets in $X$, another is from integrations of functions with respect to them. There are some equivalent descriptions of these notions from both point of views, however, the difficulty on verifying these descriptions may be of big difference in various applications, at least technically. This is one of the reason for us to provide more (equivalent, necessary or sufficient) conditions on these convergence. We decide to take the second point of view to define these notions of convergence, that is, we are going to define them by the asymptotic behaviours of integrations of functions of certain regularity with respect to the sequences of measures in $\mathcal{\tilde{M}}(X)$. 

For a set $A\subset X$, let $\overline{A}, A^o, A^c, \partial A$ be its closure, interior, complement and boundary respectively. A continuous function $f: X\rightarrow\mathbb{R}$ is said to \emph{vanish at infinity} if 
\begin{center}
$f^{-1}\big((-\infty,-a]\cup[a,\infty)\big)$ 
\end{center}
is always compact for any $a>0$ \cite[p132]{Fol} in $X$. The \emph{support} of a function $f: X\rightarrow\mathbb{R}$ is defined to be 
\begin{center}
$S(f):=\overline{f^{-1}\big((-\infty,0)\cup(0,\infty)\big)}$.  
\end{center}

The following families of functions are highlighted in our work. 

\begin{itemize}
\item $C(X)=\{f: f \mbox{ is a continuous function from } X \mbox{ to } \mathbb{R}\}.$
\item $C_0(X)=\{f: f \mbox{ is a continuous function from } X \mbox{ to } \mathbb{R} \mbox{ vanishing at infinity}\}$.
\item $C_c(X)=\{f: f \mbox{ is a continuous function from } X \mbox{ to } \mathbb{R} \mbox{ with compact support}\}$.
\item $C_b(X)=\{f: f \mbox{ is a bounded continuous function from } X \mbox{ to } \mathbb{R}\}$.
\item $M_b(X)=\{f: f \mbox{ is a bounded measurable function from } X \mbox{ to } \mathbb{R}\}$.
\item $M_\gamma(X)=\{f: f \mbox{ is a measurable function from } X \mbox{ to } [-\gamma,\gamma]\}$ for $\gamma\geq 0$.
\end{itemize}
The following relationships between these families of functions are obvious.
\begin{center}
$M_\gamma(X)\subset M_b(X)$ and  $C_c(X)\subset C_0(X)\subset C_b(X)\subset M_b(X)$.
\end{center}

For integrations on a topological space $X$ with a Borel measure, see \cite{Li} or \cite[Chapter 3]{Tay}.

\begin{definition}\label{def2}
For a sequence of measures $\{\nu_n\in\mathcal{\tilde{M}}(X)\}_{n=1}^\infty$, we say $\{\nu_n\}_{n=1}^\infty$ converges \emph{vaguely} to $\nu\in\mathcal{\tilde{M}}(X)$, denoted by $\nu_n\stackrel{v}{\rightarrow}\nu$, if
\begin{center}
$\lim_{n\rightarrow\infty }\int_X f(x) d\nu_n=\int_X f(x) d\nu$
\end{center}
for any  $f\in C_c(X)$.
\end{definition}

Denote by $\nu_n\stackrel{v}{\nrightarrow}\nu$ as $n\rightarrow\infty$ if $\{\nu_n\in\mathcal{\tilde{M}}(X)\}_{n=1}^\infty$ does not converge vaguely to $\nu\in\mathcal{\tilde{M}}(X)$.

\begin{remark}\label{rem1}
Note that some people prefer to define a sequence of measures $\{\nu_n\in\mathcal{\tilde{M}}(X)\}_{n=1}^\infty$ converges vaguely to  $\nu\in\mathcal{\tilde{M}}(X)$, if
\begin{center}
$\lim_{n\rightarrow\infty }\int_X f(x) d\nu_n=\int_X f(x) d\nu$
\end{center}
for any  $f\in C_0(X)$, see for example \cite[P223]{Fol} and \cite[P132]{Las1}. The two concepts are not equivalent to each other in general, see our Example \ref{exm1}. However, they two coincide with each other in some cases, see Proposition \ref{pro2}. Without special declarations we always mean Definition \ref{def2} by vague convergence in the following.
\end{remark}

A metric space is said to be a \emph{Heine-Borel} space if any closed bounded subset in it is compact \cite{JW}. There are lots of typical spaces in the Heine-Borel family, for example, the Euclidean spaces, or any $\sigma$-compact and locally compact metric space. For a measure $\mu\in\mathcal{\tilde{M}}(X)$, a set $A\subset X$ is called an $\mu$-continuity set if $\mu(\partial A)=0$. Following Kallenberg, we provide some equivalent conditions on verifying vague convergence of sequences of measures in $\mathcal{\hat{M}}(X)$ with $X$ being Heine-Borel.

\begin{theorem}\label{thm6}
For a sequence of measures $\{\nu_n\in\mathcal{\hat{M}}(X)\}_{n=1}^\infty$ and $\nu\in\mathcal{\hat{M}}(X)$ on a separable and complete Heine-Borel space $X$, the following conditions are equivalent.
\begin{enumerate}[(I).]
\item $\nu_n\stackrel{v}{\rightarrow}\nu$ as $n\rightarrow\infty$.
\item For any compact set $A\subset X$,
\begin{center}
$\limsup_{n\rightarrow\infty} \nu_n(A)\leq \nu(A)$, 
\end{center}
while for any bounded open set $B\subset X$,
\begin{center}
$\liminf_{n\rightarrow\infty} \nu_n(B)\geq \nu(B)$.
\end{center}
\item For any bounded closed set $A\subset X$,
\begin{center}
$\limsup_{n\rightarrow\infty} \nu_n(A)\leq \nu(A)$, 
\end{center}
while for any bounded open set $B\subset X$,
\begin{center}
$\liminf_{n\rightarrow\infty} \nu_n(B)\geq \nu(B)$.
\end{center}
\item For any bounded set $A\in \mathscr{B}$,
\begin{center}
$\nu_n(A^o)\leq \liminf_{n\rightarrow\infty} \nu_n(A)\leq \limsup_{n\rightarrow\infty} \nu_n(A)\leq \nu(\overline{A})$.
\end{center}
\item $\lim_{n\rightarrow\infty} \nu_n(A)=\nu(A)$ for any bounded $\nu$-continuity set $A\in \mathscr{B}$.
\item $\lim_{n\rightarrow\infty}\int_X fd\nu_n=\int_X fd\nu$ for any continuous function $f$ with bounded support.
\item $\lim_{n\rightarrow\infty}\int_X fd\nu_n=\int_X fd\nu$ for any H\"older continuous function $f\in C_c(X)$.
\item $\lim_{n\rightarrow\infty}\int_X fd\nu_n=\int_X fd\nu$ for any uniformly continuous function $f\in C_c(X)$.
\item $\lim_{n\rightarrow\infty}\int_X fd\nu_n=\int_X fd\nu$ for any function $f\in M_b(X)$ with bounded support and 
\begin{center}
$\nu(\{x\in X: f \mbox{ is discontinuous at } x\})=0$.
\end{center}
\item $\lim_{n\rightarrow\infty}\int_X fd\nu_n=\int_X fd\nu$ for any non-negative (or non-positive) valued function $f\in C_c(X)$.
\end{enumerate}
\end{theorem}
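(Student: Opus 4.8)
The plan is to establish the chain of equivalences by a combination of a "portmanteau-type" cycle among the set-theoretic conditions (II)--(V) and a separate cycle among the integral conditions (I), (VI)--(X), then bridge the two groups. The backbone is the classical portmanteau argument localized to bounded sets, which is where the Heine-Borel hypothesis does its work: on such a space, bounded closed sets are compact, so (II) and (III) are literally the same statement, and every bounded set is contained in a compact set, which lets us approximate from inside by compacts and from outside by bounded open sets.

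First I would run the set-function cycle. The implication (III)$\Rightarrow$(IV) is immediate from monotonicity applied to $A^o\subset A\subset\overline{A}$ together with the open/closed halves of (III) (note $\overline{A}$ is bounded and closed, $A^o$ is bounded and open whenever $A$ is bounded). For (IV)$\Rightarrow$(V): if $A$ is a bounded $\nu$-continuity set then $\nu(A^o)=\nu(\overline A)=\nu(A)$, so the four-term inequality in (IV) collapses to $\lim_n\nu_n(A)=\nu(A)$. For (V)$\Rightarrow$(III) I would use the standard trick: given a bounded open set $B$, write it as an increasing union of $\nu$-continuity sets (e.g. $\{x: d(x,B^c)>\varepsilon\}$ fails to be a continuity set for only countably many $\varepsilon$, by a measure-theoretic pigeonhole), apply (V) along such a sequence and let $\varepsilon\to0$ using inner continuity of $\nu$; symmetrically for a bounded closed set $A$, thicken it slightly to a bounded open neighbourhood whose boundary is a $\nu$-null set. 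Then (II)$\Leftrightarrow$(III) is free from Heine-Borel, and (II)$\Rightarrow$(IV)$\Rightarrow$(V) plus (V)$\Rightarrow$(III)$\Rightarrow$(II) closes the loop.

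Next I would handle the integral conditions. The trivial inclusions $C_c(X)\supset\{$H\"older$\}\supset\cdots$ and the obvious containments among the function classes give (IX)$\Rightarrow$(VI)$\Rightarrow$(I) and (VI)$\Rightarrow$(VII)$\Rightarrow$(I), (VI)$\Rightarrow$(VIII)$\Rightarrow$(I), and (I)$\Rightarrow$(X) is immediate since every $f\in C_c$ is admissible there. For (X)$\Rightarrow$(I), split an arbitrary $f\in C_c(X)$ as $f^+-f^-$, both in $C_c(X)$ and each of constant sign. The main bridge is (I)$\Leftrightarrow$(III): the direction (I)$\Rightarrow$(III) is the usual portmanteau step --- for a compact $A$ take $f_k\in C_c(X)$ with $\mathbf 1_A\le f_k\le \mathbf 1_{U}$ for a fixed bounded open $U\supset A$ (Urysohn, using local compactness), so $\limsup_n\nu_n(A)\le\limsup_n\int f_k\,d\nu_n=\int f_k\,d\nu\to\nu(A)$ as $k\to\infty$ by dominated convergence; the open-set half is dual. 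Conversely (V)$\Rightarrow$(VI) by approximating a continuous $f$ with bounded support by simple functions built from $\nu$-continuity level sets $\{f>t\}$ (a.e. $t$ works) and integrating the layer-cake formula, which also yields (V)$\Rightarrow$(IX) via the same level-set argument since the discontinuity set is $\nu$-null. Finally I would route (IX) into the cycle by noting indicators of bounded $\nu$-continuity sets are exactly the $M_b$ functions with bounded support whose discontinuity set is $\nu$-null, giving (IX)$\Rightarrow$(V).

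The main obstacle I anticipate is the careful bookkeeping around "bounded" in the non-locally-compact-looking but actually-Heine-Borel setting: one must repeatedly invoke that a bounded set sits inside a compact set to produce the Urysohn functions in $C_c(X)$ and to ensure the approximating open/closed sets stay bounded, and one must be slightly careful that $\overline{A}$ of a bounded set is again bounded (true in a Heine-Borel metric space since bounded closed sets are compact hence bounded). The only genuinely delicate analytic point is the "$\nu$-continuity sets are plentiful" lemma --- that for all but countably many $\varepsilon$ the set $\{d(\cdot,B^c)>\varepsilon\}$, or the level set $\{f>t\}$ for all but countably many $t$, has $\nu$-null boundary --- which follows because the boundaries are disjoint for distinct parameters and $\nu$ is finite on the relevant bounded region. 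Everything else is routine approximation and monotonicity.
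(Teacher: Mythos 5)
Your overall architecture (a portmanteau cycle among the set conditions (II)--(V), an integral cycle among (I), (VI), (IX), (X), and a bridge (I)$\Rightarrow$(III), (V)$\Rightarrow$(IX) via distance-function Urysohn approximations and the ``all but countably many $\varepsilon$'' continuity-set lemma) is essentially the paper's route, and those parts are sound. But there is one genuine gap: you dispose of (VII) and (VIII) by writing ``the trivial inclusions $C_c(X)\supset\{\text{H\"older}\}\supset\cdots$ give $(VI)\Rightarrow(VII)\Rightarrow(I)$ and $(VI)\Rightarrow(VIII)\Rightarrow(I)$.'' The containment points the wrong way for the implications you need. Condition (VII) quantifies over a \emph{smaller} test class than (I), so the inclusion H\"older-$C_c\subset C_c$ only yields the easy direction $(I)\Rightarrow(VII)$; the claim $(VII)\Rightarrow(I)$ --- that convergence tested against H\"older functions alone already forces vague convergence --- is precisely the nontrivial content and cannot be waved through by containment. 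In your graph of implications, (VII) has no other outgoing edge, so without a real argument here the equivalence of (VII) with the rest is simply not established. (For (VIII) you escape by accident: every $f\in C_c(X)$ on a metric space is automatically uniformly continuous, so the test classes in (I) and (VIII) coincide and $(VIII)\Leftrightarrow(I)$ is genuinely trivial --- but that is an equality of classes, not the proper containment your sentence suggests, and it does not rescue (VII), since H\"older-$C_c$ is in general a proper subclass.)

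The repair is cheap and is exactly what the paper does (Lemma \ref{lem5}): the explicit functions $f_{n,A}(x)=\max\{0,1-n\rho(x,A)\}$ and their open-set counterparts used in your $(I)\Rightarrow(III)$ step are Lipschitz, hence H\"older, hence uniformly continuous; so the same computation proves $(VII)\Rightarrow(III)$ and $(VIII)\Rightarrow(III)$ directly, which routes (VII) and (VIII) back into your set-condition cycle. (Alternatively one can uniformly approximate a general $f\in C_c(X)$ by Lipschitz functions via inf-convolution, but then one must first extract $\sup_n\nu_n(S)<\infty$ for a bounded neighbourhood $S$ of the support from (VII) itself before the $\|f-f_k\|_\infty$ error can be controlled, which you do not address.) Everything else --- the (III)$\Rightarrow$(IV)$\Rightarrow$(V)$\Rightarrow$(III) loop, the level-set argument for (V)$\Rightarrow$(IX), the $f^{\pm}$ splitting for (X), and the role of Heine--Borel in identifying bounded closed with compact and bounded support with compact support --- matches the paper's proof in substance.
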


\begin{remark}
The conditions $(IV), (V)$ are due to Kallenberg \cite[Lemma 4.1]{Kallen1}, while $(IX)$ is due to Klenke \cite[Theorem 13.16]{Kle} essentially. Some of these conditions provide concrete examples of \emph{approximating classes} in $C_c(X)$. In fact, according to \cite[Lemma 4.1]{Kallen1}, condition $(V)$ can be substituted by  $\lim_{n\rightarrow\infty} \nu_n(A)=\nu(A)$ for any set in a \emph{dissecting semi-ring} of all bounded $\nu$-continuity sets.
\end{remark}

\begin{remark}
The requirement on $X$ being Heine-Borel may confine applications of the results. However, event for $X$ being not Heine-Borel, equivalence of some of the listed conditions still holds. For example, in case $X$ being merely a metric space, conditions $(I), (III), (IV)$ are still equivalent. Some conditions degenerates into only necessary or sufficient conditions with respect to $(I)$ without further assumptions on $X$, as one can judge from the proof of Theorem \label{thm6} oneself.   
\end{remark}

A stronger notion of sequential convergence of measures than vague convergence is the \emph{weak convergence} as following. 

\begin{definition}
For a sequence of measures $\{\nu_n\in\mathcal{\tilde{M}}(X)\}_{n=1}^\infty$, we say $\{\nu_n\}_{n=1}^\infty$ converges \emph{weakly} to $\nu\in\mathcal{\tilde{M}}(X)$, denoted by $\nu_n\stackrel{w}{\rightarrow}\nu$, if
\begin{center}
$\lim_{n\rightarrow\infty }\int_X f(x) d\nu_n=\int_X f(x) d\nu$
\end{center}
for any  $f\in C_b(X)$.
\end{definition}

There are lots of equivalent descriptions on the weak convergence of sequences of measures whose collection is called the \emph{Portemanteau Theorem}, see for example \cite[Theorem 2.1]{Bil1}, \cite[Theorem 1.4.16]{HL1} and \cite[Theorem 13.16]{Kle}. The weak convergence of sequences of measures is widely used and studied in various situations, so will not be our focus in this work. 

Although the weak convergence of sequences of measures is a powerful tool, there are some situations under which some important properties are not guaranteed under this mode of convergence, for example, the Vitali-Hahn-Saks Theorem (refer to \cite{Doo, HL2}) or the semi-continuity of some measure-dimension mappings (refer to \cite[Theorem 3.2]{Ma1}). These properties are guaranteed under the \emph{setwise convergence} of sequences of measures, which appears as a stronger mode of convergence  than the weak convergence as following.

\begin{definition}\label{def1}
For a sequence of measures $\{\nu_n\in\mathcal{\tilde{M}}(X)\}_{n=1}^\infty$, we say $\{\nu_n\}_{n=1}^\infty$ converges \emph{setwisely} to $\nu\in\mathcal{\tilde{M}}(X)$, denoted by $\nu_n\stackrel{s}{\rightarrow}\nu$, if
\begin{center}
$\lim_{n\rightarrow\infty }\int_X f(x) d\nu_n=\int_X f(x) d\nu$
\end{center}
for any $f\in M_b(X)$.
\end{definition}

Since simple functions are dense in the space of bounded measurable functions, $\nu_n\stackrel{s}{\rightarrow}\nu$ is equivalent to say that 
\begin{center}
$\lim_{n\rightarrow\infty }\nu_n(A)=\nu(A)$ 
\end{center}
for any $A\in\mathscr{B}$. Feinberg, Kasyanov and Zgurovsky gave some equivalent conditions on the setwise convergence of sequences of measures in $\mathcal{\hat{M}}(X)$ with a metric ambient space $X$ as following \cite[Theorem 2.3]{FKZ1}. 

\begin{theorem}[Feinberg-Kasyanov-Zgurovsky]\label{thm4}
For a sequence of measures $\{\nu_n\in\mathcal{\hat{M}}(X)\}_{n=1}^\infty$ and $\nu\in\mathcal{\hat{M}}(X)$ on a metric space $X$, the following conditions are equivalent to each other:
\begin{enumerate}[(I).]
\item $\nu_n\stackrel{s}{\rightarrow}\nu$ as $n\rightarrow\infty$.
\item $\lim_{n\rightarrow\infty} \nu_n(B)= \nu(B)$ for any open set $B\subset X$.
\item $\lim_{n\rightarrow\infty} \nu_n(A)= \nu(A)$ for any closed set $A\subset X$.
\end{enumerate}
\end{theorem}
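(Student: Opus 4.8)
The plan is to prove $(I)\Rightarrow(II)$, then $(II)\Leftrightarrow(III)$, and finally $(III)\Rightarrow(I)$, the last being the only step with real content. The implication $(I)\Rightarrow(II)$ is immediate: as recalled just before the statement, $\nu_n\stackrel{s}{\rightarrow}\nu$ is equivalent to $\lim_{n\to\infty}\nu_n(A)=\nu(A)$ for every $A\in\mathscr{B}$, and open sets are Borel. For $(II)\Leftrightarrow(III)$ I would exploit that $X$ is both open and closed in itself: taking $B=X$ in $(II)$ gives $\nu_n(X)\to\nu(X)$, so for any closed set $A$, whose complement $A^c$ is open,
\[
\nu_n(A)=\nu_n(X)-\nu_n(A^c)\longrightarrow\nu(X)-\nu(A^c)=\nu(A),
\]
which is $(III)$; the reverse implication is symmetric. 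In particular, under either $(II)$ or $(III)$ both hold simultaneously, which gives control of the limit $\nu$ from above along open supersets and from below along closed subsets of an arbitrary Borel set.

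The crux is $(III)\Rightarrow(I)$, for which I would invoke the regularity of finite Borel measures on metric spaces: for any $\mu\in\mathcal{\hat{M}}(X)$ with $X$ metric and any $A\in\mathscr{B}$,
\[
\mu(A)=\sup\{\mu(F):F\subset A,\ F\text{ closed}\}=\inf\{\mu(U):U\supset A,\ U\text{ open}\}.
\]
Applying this to $\nu$: given $A\in\mathscr{B}$ and $\varepsilon>0$, choose a closed set $F\subset A$ with $\nu(F)>\nu(A)-\varepsilon$ and an open set $U\supset A$ with $\nu(U)<\nu(A)+\varepsilon$. Then, using $(III)$ on $F$ and $(II)$ on $U$,
\[
\nu(A)-\varepsilon<\nu(F)=\lim_{n\to\infty}\nu_n(F)\leq\liminf_{n\to\infty}\nu_n(A)\leq\limsup_{n\to\infty}\nu_n(A)\leq\lim_{n\to\infty}\nu_n(U)=\nu(U)<\nu(A)+\varepsilon .
\]
Letting $\varepsilon\to0$ yields $\lim_{n\to\infty}\nu_n(A)=\nu(A)$ for every $A\in\mathscr{B}$, which is $(I)$; here one uses, as in the remark preceding the statement, that $\nu_n(X)\to\nu(X)<\infty$ bounds the total masses, so that convergence on sets upgrades to $\int_X f\,d\nu_n\to\int_X f\,d\nu$ for every $f\in M_b(X)$ by uniform approximation with simple functions.

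The main obstacle --- indeed essentially the only point requiring care --- is the regularity input for the limit measure $\nu$: it is the metrizability of $X$, rather than the finiteness of the measure, that forces every Borel set to be squeezed between a closed subset and an open superset of nearly equal $\nu$-measure, and the whole argument rests on this squeeze. I would stress that only regularity of the \emph{limit} $\nu$ is used: no uniformity in $n$, and in particular no Vitali-Hahn-Saks-type equi-countable-additivity, enters here.
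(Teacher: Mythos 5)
Your proof is correct. Note first that the paper does not actually prove this statement: it is quoted verbatim from Feinberg--Kasyanov--Zgurovsky \cite[Theorem 2.3]{FKZ1}, so there is no internal proof to compare against; your argument is essentially the standard one underlying the cited result. The easy implications are handled properly: $(I)\Rightarrow(II)$ via the remark preceding the theorem, and $(II)\Leftrightarrow(III)$ by complementation after extracting $\nu_n(X)\to\nu(X)$ from $B=X$, which is legitimate precisely because all measures are finite. The real content is $(III)\Rightarrow(I)$, and you have correctly identified the one ingredient that carries it: every \emph{finite} Borel measure on a metric space is closed-inner-regular and open-outer-regular (note this is genuinely weaker than tightness, so no completeness or separability of $X$ is needed --- an important point given that the theorem is stated for arbitrary metric spaces). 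The squeeze $\nu_n(F)\leq\nu_n(A)\leq\nu_n(U)$ with $F,U$ chosen by regularity of the limit $\nu$ alone then yields $\nu_n(A)\to\nu(A)$ for every Borel $A$, and the upgrade to integrals of functions in $M_b(X)$ by uniform simple-function approximation uses only $\sup_n\nu_n(X)<\infty$, which you have already secured. Your closing observation that no Vitali--Hahn--Saks-type uniformity is needed is also accurate: only the limit measure's regularity enters.
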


\begin{remark}
Feinberg-Kasyanov-Zgurovsky's original result is set on sequences of probability measures in  $\mathcal{M}(X)$, however, their result extends naturally to sequences of finite measures in $\mathcal{\hat{M}}(X)$, or even sequences of infinite measures in $\mathcal{\tilde{M}}(X)$ in some cases. 
\end{remark}

Considering Theorem \ref{thm6} and \ref{thm3}, it is an interesting question to ask that when $\lim_{n\rightarrow\infty} \nu_n(A)= \nu(A)$ for any closed (or open) bounded set $A\subset X$ is enough to force $\nu_n\stackrel{s}{\rightarrow}\nu$ as $n\rightarrow\infty$ in $\mathcal{\hat{M}}(X)$, at least in case the ambient space $X$ is good enough. This is usually not true even if $X$ is a separable and complete Heine-Borel space $X$ (of course unbounded), see our Example \ref{exm2}.  

Since it is inevitable to deal with unbounded measurable functions in various applications of the setwise convergence, we will briefly discuss limit behaviours of integrations of unbounded measurable functions with respect to setwisely convergent sequences of measures in $\mathcal{\hat{M}}(X)$ in Section \ref{sec4}. Our Example \ref{exm3}. alerts the readers that the convergence of integrations of unbounded measurable functions with respect to setwisely convergent sequences of measures is usually lost. However, one may expect the convergence in some special cases, see Proposition \ref{pro3}.   

We are also quite interested in Theorem \ref{thm4} when the ambient space $X$ is non-metrizable. Since measures on non-metrizable ambient spaces lose regularity, one can expect that Theorem \ref{thm4} will not be true in some cases.  

\begin{theorem}\label{thm5}
For a topological space $X$ with its Borel $\sigma$-algebra $\mathscr{B}$, if it admits infinitely many non-empty pairwise disjoint closed (open) sets, and every proper closed (open) subset in $X$  contains at most finitely many disjoint non-empty closed (open) subsets, then there is a sequence of probability measures $\{\nu_n\in\mathcal{\hat{M}}(X)\}_{n=1}^\infty$ and  $\nu\in\mathcal{M}(X)$ satisfying both the conditions $(II), (III)$ in Theorem \ref{thm4}, while
\begin{center}
$\nu_n\stackrel{s}{\nrightarrow}\nu$ 
\end{center}
as $n\rightarrow\infty$.
\end{theorem}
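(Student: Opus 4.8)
The plan is to manufacture the counterexample from the given family of disjoint closed sets, the point being that the second hypothesis makes every proper closed set ``combinatorially negligible'' relative to that family: a sequence of point masses that escapes through the family then recovers every closed and every open set in the limit, while the limit itself can be taken to carry no mass on any proper closed set.

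First I would fix a sequence $\{F_k\}_{k=1}^{\infty}$ of pairwise disjoint non-empty closed subsets of $X$ and prove a \emph{smallness lemma}: if $A\subsetneq X$ is closed, then $A\cap F_k=\emptyset$ for all but finitely many $k$. Indeed, were $A$ to meet $F_{k_1},F_{k_2},\dots$, the sets $\{A\cap F_{k_j}\}_j$ would be infinitely many pairwise disjoint non-empty closed subsets of the proper closed set $A$, against the hypothesis. Passing to complements, every non-empty open $B\subsetneq X$ contains $F_k$ for all but finitely many $k$. (As a byproduct $X$ is not a union of two proper closed sets, since each would meet only finitely many $F_k$; in particular $X$ is hyperconnected, which is what makes ``$\nu(B)=1$ for every non-empty open $B$'' even possible.) Now choose $x_k\in F_k$ and set $\nu_k:=\delta_{x_k}$, the Dirac mass at $x_k$ restricted to $\mathscr B$ — a bona fide Borel probability measure whether or not $\{x_k\}$ is Borel. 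For proper closed $A$ the set $\{k:x_k\in A\}$ is contained in the finite set $\{k:A\cap F_k\neq\emptyset\}$, so $\nu_k(A)\to 0$; also $\nu_k(X)=1$; dually $\nu_k(B)\to 1$ for every non-empty open $B$ and $\nu_k(\emptyset)=0$.

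The decisive step is to produce a limit $\nu\in\mathcal M(X)$ with $\nu(A)=0$ for every proper closed $A$ (equivalently $\nu(B)=1$ for every non-empty open $B$) and $\nu(X)=1$ — a probability measure concentrated ``at infinity''. One concrete route is to fix a non-principal ultrafilter $\mathcal U$ on $\mathbb N$ and set $\nu(E):=\lim_{\mathcal U}\nu_k(E)$ for $E\in\mathscr B$; by the smallness lemma this finitely additive probability annihilates every proper closed set, and the remaining work is to verify it is $\sigma$-additive on $\mathscr B$. I expect this to be the main obstacle, and it is where non-metrizability (more precisely, $X$ being ``large'', e.g.\ not a countable union of proper closed sets) enters. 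The guiding example is the cofinite topology on an uncountable set, where the $F_k$ may be taken to be singletons and $\nu$ is simply the measure giving $0$ to countable sets and $1$ to cocountable sets, which is visibly $\sigma$-additive.

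Granting such a $\nu$, the rest is routine verification. For closed $A$: if $A=X$ both members are $1$, and if $A\subsetneq X$ then $\nu_k(A)\to 0=\nu(A)$. For open $B$: the cases $B=\emptyset$ and $B=X$ are immediate, and otherwise $\nu_k(B)\to 1=\nu(B)$. Hence $\{\nu_k\}$ and $\nu$ satisfy conditions $(II)$ and $(III)$ of Theorem \ref{thm4}. On the other hand $E:=\bigcup_{k=1}^{\infty}F_k\in\mathscr B$ satisfies $\nu_k(E)=1$ for all $k$ (as $x_k\in F_k\subseteq E$) while $\nu(E)=\sum_k\nu(F_k)=0$ by $\sigma$-additivity; so $\nu_k(E)\to 1\neq 0=\nu(E)$ and $\nu_k\stackrel{s}{\nrightarrow}\nu$. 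The ``open'' version of the theorem is entirely symmetric: work with a sequence $\{G_k\}$ of disjoint non-empty open sets (so that every proper open set meets only finitely many $G_k$ and every non-empty closed set contains all but finitely many of them), set $\nu_k:=\delta_{x_k}$ with $x_k\in G_k$, and take $\nu$ to assign full mass to every non-empty closed set; setwise convergence then fails on $\bigcup_kG_k$.
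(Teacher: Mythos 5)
Your smallness lemma and the idea of point masses escaping through the disjoint family coincide exactly with the paper's key observation (the paper likewise argues that a proper closed set meeting infinitely many of the $A_i$ would contain infinitely many disjoint non-empty closed subsets). The genuine gap is at what you yourself call the decisive step: you never produce the limit measure $\nu$, and under the stated hypotheses it need not exist. Take $X=\mathbb{N}$ with the cofinite topology: the singletons form an infinite pairwise disjoint family of non-empty closed sets, and every proper closed set is finite, hence contains at most finitely many disjoint non-empty closed subsets, so the hypotheses hold; yet the Borel $\sigma$-algebra is all of $2^{\mathbb{N}}$, and every countably additive probability measure on a countable set charges some singleton, so no $\nu\in\mathcal{M}(X)$ can annihilate every proper closed set. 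The ultrafilter limit $\lim_{\mathcal{U}}\nu_k(E)$ is only finitely additive and cannot be upgraded here. So your plan cannot be completed from the hypotheses alone; it would require an additional largeness assumption on $X$ (your parenthetical ``not a countable union of proper closed sets'' points in the right direction, but it is not part of the theorem).

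The paper sidesteps the existence problem by a cheaper device: after discarding one $A_i$ if necessary so that $X\setminus\bigcup_iA_i\neq\emptyset$, it fixes $a_*$ in that complement, sets $\nu_n=\frac{n-1}{n}\delta_{a_n}+\frac{1}{n}\delta_{a_*}$ with $a_n\in A_n$, and takes $\nu=\delta_{a_*}$, a countably additive probability measure by fiat; setwise convergence then fails on the Borel set $\bigcup_iA_i$, where $\nu_n$ gives mass $\frac{n-1}{n}\to 1$ but $\nu$ gives mass $0$. You should note, however, that your instinct that the limit must kill every proper closed set is exactly the necessary condition for $(II)$ and $(III)$: the paper's verification that $\lim_n\nu_n(F)=\nu(F)$ for proper closed $F$ tacitly assumes $a_*\notin F$, which fails whenever $a_*$ lies in some proper closed set (e.g.\ when points are closed, as for the classical points of a variety with the Zariski topology). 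So the difficulty you ran into is not an artifact of your construction; it is latent in the hypotheses of the theorem itself, and any complete proof has to address it.
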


Especially, this means $\lim_{n\rightarrow\infty} \nu_n(B)= \nu(B)$ for any open (or closed) set of an \emph{affine} (or \emph{projective}) \emph{space} endowed with the \emph{Zariski topology} is not enough to guarantee $\nu_n\stackrel{s}{\rightarrow}\nu$ for Borel measures $\{\nu_n\}_{n\in\mathbb{N}}\cup\{\nu\}$ on the affine (or projective) space, see Corollary \ref{cor1}. Theorem \ref{thm5} also provides some counter examples on denying Theorem \ref{thm6} and \ref{thm3} on non-metrizable ambient spaces. See also \cite[Example 1]{Pfa}.

The strongest notion of convergence of sequences of measures considered in our work is the \emph{TV convergence}. Of course more desiring properties is guaranteed under this mode of convergence.  

\begin{definition}
For a sequence of measures $\{\nu_n\in\mathcal{\tilde{M}}(X)\}_{n=1}^\infty$, we say $\{\nu_n\}_{n=1}^\infty$ converges \emph{in total variation} (TV) to $\nu\in\mathcal{\tilde{M}}(X)$, denoted by $\nu_n\stackrel{TV}{\rightarrow}\nu$, if
\begin{center}
$\lim_{n\rightarrow\infty }\sup_{f\in M_1(X)}|\int_X f(x) d\nu_n-\int_X f(x) d\nu|=0$.
\end{center}
\end{definition}

In this case the space $\mathcal{\tilde{M}}(X)$ is metrizable under the \emph{total-variation} (TV) metric
\begin{equation}\label{eq24}
\Vert\mu-\nu\Vert_{TV}:=2\sup_{A\in\mathscr{B}}|\mu(A)-\nu(A)|=\sup_{f\in M_1(X)}|\int_X f(x) d\mu-\int_X f(x) d\nu|
\end{equation}
for any two probability measures $\mu, \nu\in \mathcal{\tilde{M}}(X)$. We also give some equivalent conditions on the TV convergence of sequences of measures in Theorem \ref{thm3}.

\begin{theorem}\label{thm3}
For a sequence of measures $\{\nu_n\in\mathcal{\hat{M}}(X)\}_{n=1}^\infty$ and $\nu\in\mathcal{\hat{M}}(X)$ on a metric space $X$, the following conditions are equivalent to each other:
\begin{enumerate}[(I).]
\item $\nu_n\stackrel{TV}{\rightarrow}\nu$ as $n\rightarrow\infty$.
\item $\lim_{n\rightarrow\infty} \sup_{A \mbox{ is closed and bounded}}|\nu_n(A)-\nu(A)|=0$.
\item $\lim_{n\rightarrow\infty} \sup_{B \mbox{ is open and bounded}}|\nu_n(B)-\nu(B)|=0$.
\item $\lim_{n\rightarrow\infty}\sup_{f\in M_\gamma(X)}|\int_X f(x) d\nu_n-\int_X f(x) d\nu|=0$ for any $\gamma\geq 0$.
\item $\lim_{n\rightarrow\infty}\sup_{f \mbox{ has bounded support in } M_\gamma(X)}|\int_X f(x) d\nu_n-\int_X f(x) d\nu|=0$ for any $\gamma\geq 0$.
\item $\lim_{n\rightarrow\infty}\sup_{f \mbox{ has bounded support in } C(X)\cap M_\gamma(X)}|\int_X f(x) d\nu_n-\int_X f(x) d\nu|=0$ for any $\gamma\geq 0$.
\item $\lim_{n\rightarrow\infty}\sup_{f \mbox{ is uniformly continuous in } C(X)}|\int_X f(x) d\nu_n-\int_X f(x) d\nu|=0$ for any $\gamma\geq 0$.
\end{enumerate}
\end{theorem}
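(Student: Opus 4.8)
The plan is to establish the cycle $(\mathrm{I})\Leftrightarrow(\mathrm{IV})\Rightarrow(\mathrm{V})\Rightarrow(\mathrm{VI})\Rightarrow(\mathrm{VII})\Rightarrow(\mathrm{I})$ together with $(\mathrm{I})\Leftrightarrow(\mathrm{II})\Leftrightarrow(\mathrm{III})$. Throughout write $\lambda_n:=\nu_n-\nu$, a finite signed Borel measure on $X$. By definition $\nu_n\stackrel{TV}{\rightarrow}\nu$ means $\sup_{f\in M_1(X)}\big|\int_X f\,d\nu_n-\int_X f\,d\nu\big|\to 0$; since $\sup_{f\in M_1(X)}\big|\int_X f\,d\lambda\big|=|\lambda|(X)$ for any finite signed measure $\lambda$ (the supremum is attained at $f=\mathbf{1}_{P}-\mathbf{1}_{N}$ for a Hahn decomposition $X=P\sqcup N$), condition $(\mathrm{I})$ is equivalent to $|\lambda_n|(X)\to 0$, hence to $\sup_{A\in\mathscr{B}}|\lambda_n(A)|\to 0$ (these last two quantities differ by at most a factor $2$). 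Most of the work sits in one regularity lemma and in the implication $(\mathrm{VII})\Rightarrow(\mathrm{I})$.

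The formal implications come first. Condition $(\mathrm{IV})$ for $\gamma=1$ is literally $(\mathrm{I})$; conversely, for $\gamma>0$ one has $\sup_{f\in M_\gamma(X)}\big|\int_X f\,d\lambda_n\big|=\gamma\,\sup_{f\in M_1(X)}\big|\int_X f\,d\lambda_n\big|$ by rescaling $f$, and for $\gamma=0$ that supremum vanishes, so $(\mathrm{I})\Leftrightarrow(\mathrm{IV})$. The chain $(\mathrm{IV})\Rightarrow(\mathrm{V})\Rightarrow(\mathrm{VI})\Rightarrow(\mathrm{VII})$ is immediate because at each step the class of admissible test functions shrinks, so a supremum tending to $0$ forces the smaller supremum to tend to $0$.

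For $(\mathrm{I})\Leftrightarrow(\mathrm{II})\Leftrightarrow(\mathrm{III})$ I would prove the following: for every finite signed Borel measure $\lambda$ on a metric space $X$, fixing a basepoint $x_0$ with closed balls $B_R\uparrow X$,
\[
\sup_{A\in\mathscr{B}}|\lambda(A)|=\sup_{F\ \text{closed and bounded}}|\lambda(F)|=\sup_{G\ \text{open and bounded}}|\lambda(G)|.
\]
The ``$\ge$'' inequalities are trivial. For the rest, first record that every finite Borel measure on a metric space is regular (inner regular by closed sets, outer regular by open sets): the family of Borel $B$ such that for each $\varepsilon>0$ there are a closed $F\subset B$ and an open $G\supset B$ with $|\lambda|(G\setminus F)<\varepsilon$ is a $\sigma$-algebra containing every closed set (approximate a closed $F$ from outside by $\{x:d(x,F)<1/k\}\downarrow F$), hence is all of $\mathscr{B}$. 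Now, given Borel $A$ and $\eta>0$: replace $A$ by $A\cap B_R$ with $R$ so large that $|\lambda(A\cap B_R)-\lambda(A)|<\eta$ (continuity of $\lambda$ along $A\cap B_R\uparrow A$), then pick a closed $C\subset A\cap B_R$ with $|\lambda|\big((A\cap B_R)\setminus C\big)<\eta$; then $C$ is bounded and, since $|\lambda(C)-\lambda(A)|<2\eta$, we get $|\lambda(C)|>|\lambda(A)|-2\eta$, giving the first equality. For the second, use the Hahn decomposition to reduce to outer-regularizing $P$ (resp.\ $N$) by a bounded open set. Taking $\lambda=\lambda_n$ and letting $n\to\infty$ turns the lemma into $(\mathrm{I})\Leftrightarrow(\mathrm{II})\Leftrightarrow(\mathrm{III})$.

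The remaining implication $(\mathrm{VII})\Rightarrow(\mathrm{I})$ is the crux, and the only delicate issue---uniformity in $n$---is precisely what must be circumvented. Suppose $(\mathrm{I})$ fails; then there are $\varepsilon_0>0$ and infinitely many $n$ with $\sup_{A\in\mathscr{B}}|\lambda_n(A)|>\varepsilon_0$, hence a Borel $A_n$ with $|\lambda_n(A_n)|>\varepsilon_0$. For each such $n$, truncating by a large ball and inner-regularizing as in the lemma yields a bounded closed set $C_n$ with $|\lambda_n(C_n)|>\varepsilon_0/2$. Since $C_n$ is closed and $|\lambda_n|$ is finite, $|\lambda_n|\big(\{x:0<d(x,C_n)<\delta\}\big)\downarrow 0$ as $\delta\downarrow 0$, so fix $\delta_n>0$ with this quantity $<\varepsilon_0/4$ and set $h_n(x):=\max\{1-\delta_n^{-1}d(x,C_n),\,0\}$. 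Then $h_n$ is $\delta_n^{-1}$-Lipschitz (hence uniformly continuous), takes values in $[0,1]$, has support inside the $\delta_n$-neighbourhood of $C_n$ (hence bounded), and $h_n\equiv 1$ on $C_n$, so
\[
\Big|\int_X h_n\,d\lambda_n\Big|\ \ge\ |\lambda_n(C_n)|-|\lambda_n|\big(\{0<d(\cdot,C_n)<\delta_n\}\big)\ >\ \varepsilon_0/2-\varepsilon_0/4\ =\ \varepsilon_0/4 .
\]
Since $h_n$ is a uniformly continuous function of bounded support with values in $[-1,1]$, it lies in the class of test functions appearing in $(\mathrm{VII})$ (and in those of $(\mathrm{V})$ and $(\mathrm{VI})$), so the supremum in $(\mathrm{VII})$ exceeds $\varepsilon_0/4$ for infinitely many $n$, contradicting $(\mathrm{VII})$. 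I expect this to be the main obstacle: one must resist building a single test function valid for all $n$---generally impossible on a non-separable metric space---and instead use that falsifying $(\mathrm{VII})$ only needs, for infinitely many $n$ separately, one test function capturing a fixed fraction of $|\lambda_n|(X)$.
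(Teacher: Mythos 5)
Your proposal is essentially correct and, in substance, travels a genuinely different route from the paper. The paper outsources the two hard steps to Feinberg--Kasyanov--Zgurovsky: its Lemma on $(\mathrm{I})\Leftrightarrow(\mathrm{II})\Leftrightarrow(\mathrm{III})$ cites \cite[Theorem 2.5 (i)(ii)]{FKZ1} for the unbounded closed/open case and only adds the ball truncation, and its key Lemma~\ref{lem6} (giving $(\mathrm{IV})\Leftrightarrow(\mathrm{VI})$) cites \cite[Theorem 2.5 (iv)]{FKZ1} before multiplying by a bump function; the classes $(\mathrm{V})$ and $(\mathrm{VII})$ are then squeezed between $(\mathrm{IV})$ and $(\mathrm{VI})$. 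You instead prove the regularity of finite Borel measures on metric spaces from scratch and close the diagram by a direct contrapositive $(\mathrm{VII})\Rightarrow(\mathrm{I})$: if $\sup_A|\lambda_n(A)|\not\to 0$, inner-regularize and truncate to a bounded closed $C_n$ carrying a fixed fraction of the mass, then build a Lipschitz cutoff $h_n$ adapted to $C_n$ \emph{separately for each $n$}. That last point is exactly right and is the real content of the implication; your argument is self-contained, and since $h_n$ lies simultaneously in the test classes of $(\mathrm{V})$, $(\mathrm{VI})$ and $(\mathrm{VII})$, it proves all three reverse implications at once, which is cleaner than the paper's chain.

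One arrow in your stated cycle is misjustified: $(\mathrm{VI})\Rightarrow(\mathrm{VII})$ is \emph{not} a matter of shrinking test classes. The class in $(\mathrm{VII})$ (uniformly continuous functions in $C(X)$, bounded by $\gamma$) is not contained in the class of $(\mathrm{VI})$ (continuous, bounded by $\gamma$, with \emph{bounded support}): the constant function $\gamma$ is uniformly continuous but has unbounded support whenever $X$ is unbounded, and conversely a continuous function with bounded support need not be uniformly continuous on a general metric space (bounded closed sets need not be compact). As written, nothing in your cycle establishes $(\mathrm{VII})$ from the other conditions. The fix costs one line and uses only what you already have: $(\mathrm{IV})\Rightarrow(\mathrm{VII})$ holds by genuine class inclusion (uniformly continuous functions in $M_\gamma(X)$ form a subclass of $M_\gamma(X)$), and your $h_n$ construction gives $(\mathrm{VII})\Rightarrow(\mathrm{I})$ as well as $(\mathrm{V})\Rightarrow(\mathrm{I})$ and $(\mathrm{VI})\Rightarrow(\mathrm{I})$, so all seven conditions are equivalent. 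With that arrow rerouted the proof is complete.
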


One is recommended to compare the result with \cite[Theorem 2.5]{FKZ1}, Theorem \ref{thm6} as well as our Example \ref{exm2}. 

We alert the readers that the strength of the modes of sequential convergence 
\begin{center}
TV convergence$\Rightarrow$ setwise convergence $\Rightarrow$ weak convergence $\Rightarrow$ vague convergence
\end{center}
holds as we are considering Borel measures on the topological space $X$. These relationships may not be true if one considers measures on $X$ with non-Borel $\sigma$-algebra. This also affects descriptions of these kinds of convergence of sequences of measures on $X$ with non-Borel $\sigma$-algebra.

\section{Description of the vague convergence of sequences of measures in $\mathcal{\hat{M}}(X)$}\label{sec3}

This section is devoted to the proof of Theorem \ref{thm6}. Before the proof we first make a comparison on the two kinds of vague convergence respectively in Definition \ref{def2} and Remark \ref{rem1}. Example \ref{exm1} shows that the one in Remark \ref{rem1} may be strictly stronger than the one in Definition \ref{def2} for vaguely sequential convergence in $\mathcal{\tilde{M}}(X)$. For a measure $\nu\in \mathcal{\tilde{M}}(X)$ and $A\in \mathscr{B}$,  let $\nu|_{A}$ be the restriction of $\nu$ on $A$.

\begin{example}\label{exm1}

Let $X=\{1,2,\dots\}=\mathbb{N}$ be endowed with the discrete topology and the corresponding Borel $\sigma$-algebra $\mathscr{B}$.  Let $\nu_c$ be the counting measure on $(X, \mathscr{B})$. Let $\nu_n:=\nu_c|_{\{n,n+1,\dots\}}$ for any $n\in\mathbb{N}$ on $(X, \mathscr{B})$. Let $\nu$ be the null measure on $(X, \mathscr{B})$.

Since a subset of $X$ is compact if and only if it is finite, for any function $f\in C_c(X)$, it is identically zero outside a finite subset. Therefore,
\begin{center}
$\int_X f d\nu_n=\sum_{m\ge n}f(m)\rightarrow 0=\int_X f d\nu$
\end{center}
as $n\rightarrow \infty.$

Now consider the continuous function $g(n)=\frac{1}{n}$ for any $n\in\mathbb{N}$, it is in $C_0(X)$, while
\begin{center}
$\int_X g d\nu_n=\sum_{m\ge n}\frac{1}{m}$
\end{center}
for any $n\in\mathbb{N}$, which does not converge to $0=\int_X gd\nu.$
\end{example}

Even if one restricts the consideration on the probability space $\mathcal{M}(X)$, the two versions of vague convergence may differ from each other if the ambient space $X$ is pathological, see Theorem \ref{thm5}. However,  the two notions are equivalent to each other on  $\mathcal{\hat{M}}(X)$ with a $\sigma$-compact and locally compact Hausdorff (LCH) space $X$.

\begin{proposition}\label{pro2}
In case $X$ is a $\sigma$-compact LCH space, let $\{\nu_n\}_{n\in\mathbb{N}}\cup\{\nu\}\subset \mathcal{\hat{M}}(X)$. If
\begin{center}
$\lim_{n\rightarrow\infty }\int_X f(x) d\nu_n=\int_X f(x) d\nu$
\end{center}
for any  $f\in C_c(X)$, then
\begin{equation}\label{eq22}
\lim_{n\rightarrow\infty }\int_X f(x) d\nu_n=\int_X f(x) d\nu
\end{equation}
for any  $f\in C_0(X)$.
\end{proposition}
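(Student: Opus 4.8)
The plan is to derive the conclusion from two ingredients: the sup-norm density of $C_c(X)$ in $C_0(X)$, which is where $\sigma$-compactness and local compactness of $X$ enter, and a uniform control of the total masses $\nu_n(X)$; once both are in hand, a standard $\varepsilon/3$ estimate finishes the argument. Note that I would \emph{not} try to route this through Theorem \ref{thm6}, since a $\sigma$-compact LCH space need not be metrizable, let alone separable, complete and Heine-Borel.

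First I would record the density statement: for every $f\in C_0(X)$ and every $\varepsilon>0$ there is $g\in C_c(X)$ with $\sup_{x\in X}|f(x)-g(x)|<\varepsilon$. Since $X$ is $\sigma$-compact and LCH, local compactness lets one build an exhaustion by compact sets $K_1\subset K_2^{o}$, $K_2\subset K_3^{o},\dots$ with $\bigcup_m K_m=X$. Because $f$ vanishes at infinity, the compact set $\{x:|f(x)|\ge\varepsilon\}$ is contained in some $K_m$; Urysohn's lemma for LCH spaces furnishes $\phi\in C_c(X)$ with $0\le\phi\le 1$ and $\phi\equiv 1$ on $K_m$, and then $g:=\phi f\in C_c(X)$ satisfies $\sup_{x\in X}|f(x)-g(x)|=\sup_{x\notin K_m}|(1-\phi(x))f(x)|\le\sup_{x\notin K_m}|f(x)|<\varepsilon$. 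This is the classical fact that $C_c(X)$ is dense in $\big(C_0(X),\|\cdot\|_\infty\big)$.

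The step I expect to be the main obstacle is establishing a uniform bound $M:=\sup_{n\in\mathbb{N}}\nu_n(X)<\infty$. The hypothesis only controls the functionals $f\mapsto\int_X f\,d\nu_n$ on $C_c(X)$, and $C_c(X)$ is not complete in the sup norm, so the Banach--Steinhaus theorem does not apply directly there; some extra input on the family $\{\nu_n\}$ is needed. (When the $\nu_n$ are probability measures one simply has $\nu_n(X)=1$ and the whole proposition is then routine; in general the masses must be kept under control, since otherwise the implication can fail.) I would therefore treat securing $M<\infty$ as the essential technical point, and build the remainder of the proof on top of it.

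Granting $M<\infty$, fix $f\in C_0(X)$ and $\varepsilon>0$, and use the density step to choose $g\in C_c(X)$ with $\sup_{x\in X}|f(x)-g(x)|<\varepsilon/\bigl(3(M+\nu(X)+1)\bigr)$. Then, for every $n$,
\[
\Bigl|\int_X f\,d\nu_n-\int_X f\,d\nu\Bigr|\le \sup_{x\in X}|f(x)-g(x)|\,\nu_n(X)+\Bigl|\int_X g\,d\nu_n-\int_X g\,d\nu\Bigr|+\sup_{x\in X}|f(x)-g(x)|\,\nu(X),
\]
where the first and third summands are each $<\varepsilon/3$ (using $\nu_n(X)\le M$ and the finiteness of $\nu$), and the middle summand is $<\varepsilon/3$ for all large $n$ by the hypothesis applied to $g\in C_c(X)$. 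Hence $\limsup_{n\to\infty}\bigl|\int_X f\,d\nu_n-\int_X f\,d\nu\bigr|\le\varepsilon$, and since $\varepsilon>0$ and $f\in C_0(X)$ were arbitrary this gives \eqref{eq22}.
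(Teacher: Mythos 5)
Your plan is the same as the paper's: uniformly approximate $f\in C_0(X)$ by some $g\in C_c(X)$ (this is where $\sigma$-compactness and local compactness are used) and run a three-term estimate. The one place you differ is that you explicitly isolate the uniform mass bound $M=\sup_{n}\nu_n(X)<\infty$ as an unproven ingredient, whereas the paper's proof hides it: the final inequality $(2+3\nu(X))\epsilon$ there silently assumes $\nu_n(X)\le 1+2\nu(X)$ for large $n$, which is never justified. So your proof is incomplete at exactly the point you flagged --- and your suspicion is correct that this is the essential obstacle, because the gap cannot be closed from the stated hypotheses: convergence of $\int f\,d\nu_n$ for all $f\in C_c(X)$ gives no upper control on the total masses of finite measures. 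Concretely, take $X=\mathbb{N}$ with the discrete topology (a $\sigma$-compact LCH space), $\nu_n=n\,\delta_n$ and $\nu$ the null measure. Every $f\in C_c(X)$ has finite support, so $\int_X f\,d\nu_n=nf(n)=0$ for all large $n$ and the hypothesis holds; yet $f(k)=k^{-1/2}$ lies in $C_0(X)$ and $\int_X f\,d\nu_n=\sqrt{n}\to\infty\neq 0=\int_X f\,d\nu$. Hence the proposition as stated is false, and both your argument and the paper's need the additional hypothesis $\sup_n\nu_n(X)<\infty$ (automatic, e.g., for probability measures), under which your $\varepsilon/3$ estimate completes the proof verbatim.
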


\begin{proof}
If the ambient space $X$ is $\sigma$-compact and LCH, according to \cite[Section 1.10]{Tao}, for any  $f\in C_0(X)$, we can find $f_c\in C_c(X)$, such that
\begin{center}
$\Vert f-f_c\Vert_\infty\leq \epsilon$
\end{center}
for any $\epsilon>0$. Since $\lim_{n\rightarrow\infty }\int_X f_c d\nu_n=\int_X f_c d\nu$, there exists $N_1\in\mathbb{N}$ large enough, such that 
\begin{center}
$|\int_X f_c d\nu_n-\int_X f_c d\nu|<\epsilon$
\end{center}
for any $\epsilon>0$. So
\begin{center}
$
\begin{array}{ll}
& |\int_X f d\nu_n-\int_X f d\nu|\\
\leq & |\int_X f_c d\nu_n-\int_X f_c d\nu|+\int_X |f-f_c|(d\nu_n+d\nu)\\
\leq & (2+3\nu(X))\epsilon
\end{array}
$
\end{center}
for $n$ large enough. This implies (\ref{eq22}) since $\nu(X)<\infty$.
 
\end{proof}

From now on we go towards the proof of Theorem \ref{thm6}. To do this we need the following preceding results. For two subsets $A, B$ in a metric space $X$ with metric $\rho$, let 
\begin{center}
$\rho(A, B)=\inf_{x\in A, y\in B}\rho(x,y)$  
\end{center}
be their distance in $X$.

\begin{lemma}\label{lem4}
Let $\{\nu_n\}_{n\in\mathbb{N}}\cup\{\nu\}\subset \mathcal{\hat{M}}(X)$ with $X$ being Heine-Borel with metric $\rho$. If $\nu_n\stackrel{v}{\rightarrow}\nu$ as $n\rightarrow\infty$, then
\begin{equation}\label{eq8}
\limsup_{n\rightarrow\infty} \nu_n(A)\leq \nu(A)
\end{equation}
for any bounded closed set $A\subset X$, while 
\begin{equation}\label{eq14}
\liminf_{n\rightarrow\infty} \nu_n(B)\geq \nu(B)
\end{equation}
for any bounded open set $B\subset X$.
\end{lemma}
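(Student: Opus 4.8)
The plan is to prove each of the two displayed inequalities by a Urysohn-type sandwich argument, using the Heine-Borel property of $X$ only to guarantee that the cut-off functions I build actually lie in $C_c(X)$. The single topological input I would isolate first is this: in a Heine-Borel space, a closed set contained in a bounded set is compact; consequently, if $S\subset X$ is bounded then $\overline S$ is compact, and so is the closed neighbourhood $\{x:\rho(x,S)\le\delta\}$ for every $\delta>0$.

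\emph{Closed case.} Let $A$ be bounded and closed, hence compact (the case $A=\varnothing$ is trivial). For $\delta>0$ I would take $f_\delta(x)=\big(1-\rho(x,A)/\delta\big)^+$, which is continuous, equals $1$ on $A$, vanishes outside $A_\delta:=\{x:\rho(x,A)<\delta\}$, and therefore has support contained in the compact set $\overline{A_\delta}$ (compact since $A$, hence $A_\delta$, is bounded); thus $f_\delta\in C_c(X)$. From $\mathbf 1_A\le f_\delta\le\mathbf 1_{\overline{A_\delta}}$ and $\nu_n\stackrel{v}{\rightarrow}\nu$ one gets $\limsup_n\nu_n(A)\le\int_X f_\delta\,d\nu\le\nu(\overline{A_\delta})$. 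Letting $\delta$ run through a sequence decreasing to $0$ and using $\bigcap_{\delta>0}\overline{A_\delta}=A$ (because $A$ is closed) together with continuity of the finite measure $\nu$ from above, I obtain (\ref{eq8}).

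\emph{Open case.} Let $B$ be bounded and open; I may assume $\varnothing\neq B\neq X$, since $B=\varnothing$ is trivial and $B=X$ forces $X$ to be compact, in which case $\mathbf 1\in C_c(X)$ and the conclusion follows from $\nu_n(X)\to\nu(X)$. For $\delta>0$ I would take $g_\delta(x)=\min\big(1,\rho(x,B^c)/\delta\big)$, which is continuous, vanishes on $B^c$ so has support inside the compact set $\overline B$ (hence $g_\delta\in C_c(X)$), satisfies $g_\delta\le\mathbf 1_B$, and equals $1$ on the compact set $C_\delta:=\{x:\rho(x,B^c)\ge\delta\}\subset B$. From $\mathbf 1_{C_\delta}\le g_\delta\le\mathbf 1_B$ and vague convergence, $\liminf_n\nu_n(B)\ge\int_X g_\delta\,d\nu\ge\nu(C_\delta)$; letting $\delta\downarrow 0$ with $\bigcup_{\delta>0}C_\delta=B$ (because $B^c$ is closed) and using continuity of $\nu$ from below gives (\ref{eq14}).

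The argument is soft, and I do not expect a genuine obstacle; the only place demanding care — and the sole point where the Heine-Borel hypothesis is actually invoked — is the verification that $\overline{A_\delta}$ and $\overline B$ are compact, so that the sandwiching functions belong to $C_c(X)$ and not merely to $C_b(X)$. It is precisely the boundedness of $A$ and $B$ that makes this work, which is why the analogous statements are delicate (and generally false) for unbounded sets or on non-Heine-Borel spaces.
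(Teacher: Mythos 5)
Your proof is correct and follows essentially the same route as the paper's: sandwiching $\mathbf 1_A$ (resp.\ $\mathbf 1_B$) by the standard distance-based cut-off functions, using the Heine--Borel property only to upgrade bounded support to compact support, and then passing to the limit in the approximation parameter via continuity of the finite measure $\nu$. The only cosmetic difference is that you bound $\int f_\delta\,d\nu$ by $\nu(\overline{A_\delta})$ and use continuity from above, whereas the paper lets $\int f_{n,A}\,d\nu$ decrease directly to $\nu(A)$; these are interchangeable.
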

\begin{proof}
Let $A\subset X$ be an arbitrarily bounded closed set. For any $n\in\mathbb{N}$, consider the following function,
\begin{equation}\label{eq12}
f_{n, A}=\left\{\begin{array}{ll}
1 & x\in A, \vspace{2mm}\\
1-n\rho(x, A) &  0<\rho(x, A)<\frac{1}{n}, \vspace{2mm}\\
0 &  \rho(x, A)\geq\frac{1}{n}.
\end{array}\right.
\end{equation}

Note that $f_{n, A}$ has bounded support, and so compact support since $X$ is Heine-Borel for any $n\in\mathbb{N}$. Then 
\begin{center}
$\limsup_{n\rightarrow\infty} \nu_n(A)\leq \limsup_{n\rightarrow\infty} \int_X f_{n, A} d\nu_n=\int_X f_{n, A} d\nu$
\end{center}
as $\nu_n\stackrel{v}{\rightarrow}\nu$. Since the sequence $\{\int_X f_{n, A} d\nu\}_{n=1}^\infty$ decreases to $\nu(A)$ as $n\rightarrow\infty$, we get (\ref{eq8}).

Now let $B\subset X$ be an arbitrary bounded open set. For any $n\in\mathbb{N}$, consider the following function,
\begin{equation}\label{eq23}
g_{n, B}=\left\{\begin{array}{ll}
1 & x\in B \mbox{ and } \rho(x, \partial B)\geq \frac{1}{n}, \vspace{2mm}\\
n\rho(x, \partial B) &  x\in B \mbox{ and } \rho(x, \partial B)<\frac{1}{n}, \vspace{2mm}\\
0 &  x\notin B
\end{array}\right.
\end{equation}
for $n\in\mathbb{N}$ large enough. $g_{n, B}$ has bounded support and so compact support since $X$ is Heine-Borel for any $n\in\mathbb{N}$. Then 
\begin{center}
$\liminf_{n\rightarrow\infty} \nu_n(B)\geq \liminf_{n\rightarrow\infty} \int_X g_{n, B} d\nu_n=\int_X g_{n, B} d\nu$
\end{center}
as $\nu_n\stackrel{v}{\rightarrow}\nu$. Since the sequence $\{\int_X g_{n, B} d\nu\}_{n=1}^\infty$ increases to $\nu(B)$ as $n\rightarrow\infty$, we get (\ref{eq14}).  

\end{proof}

\begin{remark}\label{rem2}
Note that if we assume $X$ is a bounded Heine-Borel space in Lemma \ref{lem4}, then (\ref{eq8}) holds on any bounded closed set $A\subset X$ is equivalent to that (\ref{eq14}) holds on any bounded open set $B\subset X$.
\end{remark}

The following result interprets some limit behaviours of sequences of measures on bounded continuity sets in words of limit behaviours of sequences of those measures on bounded measurable sets in a metric space.
\begin{lemma}[Kallenberg]\label{lem3}
Let $\{\nu_n\}_{n\in\mathbb{N}}\cup\{\nu\}\subset \mathcal{\hat{M}}(X)$ for a metric space $(X, \rho)$. Then
\begin{equation}\label{eq9}
\nu_n(A^o)\leq \liminf_{n\rightarrow\infty} \nu_n(A)\leq \limsup_{n\rightarrow\infty} \nu_n(A)\leq \nu(\overline{A})
\end{equation}
for any bounded set $A\in \mathscr{B}$ if and only if
\begin{equation}\label{eq10}
\lim_{n\rightarrow\infty} \nu_n(A)=\nu(A)
\end{equation}
for any bounded $\nu$-continuity set $A\in \mathscr{B}$.
\end{lemma}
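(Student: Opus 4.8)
The plan is to prove the two implications separately, in each direction approximating a general bounded Borel set by its closure and interior and using $\nu$-continuity to make the two sides meet.

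First I would show that (\ref{eq9}) implies (\ref{eq10}). Let $A\in\mathscr{B}$ be a bounded $\nu$-continuity set, so $\nu(\partial A)=0$. Since $\overline{A}=A^o\cup\partial A$, we get $\nu(A^o)=\nu(\overline{A})=\nu(A)$. Applying (\ref{eq9}) to $A$ itself gives
\begin{center}
$\nu(A)=\nu(A^o)\leq\liminf_{n\to\infty}\nu_n(A)\leq\limsup_{n\to\infty}\nu_n(A)\leq\nu(\overline{A})=\nu(A)$,
\end{center}
so all four quantities coincide and $\lim_{n\to\infty}\nu_n(A)=\nu(A)$. (One must be slightly careful that $A^o$ and $\overline{A}$ are themselves bounded, which is immediate since $A$ is bounded.)

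The reverse implication is the substantive part. Assume (\ref{eq10}) holds for every bounded $\nu$-continuity set, and fix an arbitrary bounded set $A\in\mathscr{B}$. The idea is standard: for $r>0$ let $A_r=\{x:\rho(x,A)<r\}$ be the open $r$-neighbourhood of $A$, and more usefully, working with the boundary, consider the family of sets $\partial A_r$ (or level sets of the distance function to $\overline A$ and to $A^c$). For all but countably many radii $r$ in a suitable range, the relevant "shell" set has $\nu$-boundary measure zero, hence is a $\nu$-continuity set; this is the usual argument that an uncountable family of pairwise-disjoint sets cannot all have positive $\nu$-measure since $\nu$ is finite. Concretely, I would produce, for each $\varepsilon>0$, a bounded $\nu$-continuity set $U$ with $A^o\subset U$ (or $U\subset A^o$, taking complements appropriately) and $\nu(U)$ within $\varepsilon$ of $\nu(\overline A)$, together with a bounded $\nu$-continuity set $V$ with $V\subset A^o$ and $\nu(V)\geq\nu(A^o)-\varepsilon$, such that $V\subset A\subset U$. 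Then monotonicity gives $\nu_n(V)\leq\nu_n(A)\leq\nu_n(U)$, and passing to $\liminf$ and $\limsup$ and invoking (\ref{eq10}) on $U$ and $V$ yields
\begin{center}
$\nu(A^o)-\varepsilon\leq\nu(V)=\lim_{n\to\infty}\nu_n(V)\leq\liminf_{n\to\infty}\nu_n(A)\leq\limsup_{n\to\infty}\nu_n(A)\leq\lim_{n\to\infty}\nu_n(U)=\nu(U)\leq\nu(\overline A)+\varepsilon$.
\end{center}
Letting $\varepsilon\to0$ gives (\ref{eq9}).

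The main obstacle is the construction of the approximating $\nu$-continuity sets $V$ and $U$ sandwiching $A$: one needs enough open sets between $A^o$ and the ambient space, and closed sets inside $A^o$, whose boundaries are $\nu$-null. The clean way is to set $\phi(x)=\rho(x,A^o)$ on $X\setminus A^o$ (resp. the distance to $X\setminus\overline A$ inside $\overline A$), note $\{\phi=t\}$ for distinct $t$ are pairwise disjoint, so $\nu(\{\phi=t\})>0$ for at most countably many $t$; choosing $t_k\downarrow0$ avoiding this countable set, the sets $\{\phi<t_k\}$ and $\{\phi>t_k\}$ are $\nu$-continuity sets (their boundaries sit inside $\{\phi=t_k\}$), they are bounded because $A$ is bounded and $X$ is a metric space, and they squeeze down to $\overline{A^o}$ and up to $A^o$ respectively as $k\to\infty$, with $\nu$-measures converging to $\nu(\overline{A^o})\leq\nu(\overline A)$ and $\nu(A^o)$ by continuity of measure. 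Care is needed that $\overline{A^o}$ may be strictly smaller than $\overline A$, but since we only need the upper bound $\nu(\overline A)$ this causes no problem; symmetrically for the lower bound we approximate $A^o$ from inside. Everything else is bookkeeping with monotonicity of $\nu_n$ and $\nu$.
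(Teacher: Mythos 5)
Your overall strategy is sound and is essentially the paper's: the forward implication collapses the sandwich using $\nu(A^o)=\nu(\overline A)$ for a $\nu$-continuity set, and the reverse implication squeezes a bounded Borel set $A$ between $\nu$-continuity sets obtained from level sets of distance functions, using the fact that an uncountable family of disjoint level sets can carry positive $\nu$-mass only countably often. (The paper runs the outer estimate by contradiction rather than directly, but the content is the same; your direct treatment of the inner inequality via approximating $\nu$-continuity sets $V\subset A^o$ is actually more careful than the paper's, which simply asserts that $A^o$ is a $\nu$-continuity set.)

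There is, however, one concrete flaw in your construction of the outer set $U$. You take $\phi(x)=\rho(x,A^o)$ and set $U=\{\phi<t_k\}$; but this is a neighbourhood of $A^o$, not of $A$, and in general $A\not\subset U$ (take $A=(0,1)\cup\{2\}$ in $\mathbb{R}$: then $A^o=(0,1)$ and $2\notin\{\rho(\cdot,A^o)<1/2\}$). Without $A\subset U$ the monotonicity step $\nu_n(A)\leq\nu_n(U)$, on which the whole upper bound rests, fails. Your remark that ``$\overline{A^o}$ may be strictly smaller than $\overline A$ but this causes no problem'' addresses only the measure comparison $\nu(\overline{A^o})\leq\nu(\overline A)$ and misses this containment issue. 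The fix is immediate: use $\phi(x)=\rho(x,A)$, so that $U_t=\{\rho(\cdot,A)<t\}$ contains $A$ and decreases to $\overline A$ as $t\downarrow 0$ --- exactly the sets $A_\epsilon$ the paper uses. Symmetrically, the inner sets should be $\{\rho(\cdot,X\setminus A^o)>t\}$ (which lie inside $A^o$ and increase to $A^o$), not sets built from the distance to $X\setminus\overline A$, which are only contained in $\overline A$. With these substitutions your argument goes through.
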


\begin{proof}
If (\ref{eq9}) holds on any bounded set $A\in \mathscr{B}$, then (\ref{eq10}) holds on any bounded $\nu$-continuity sets obviously. In the following we show the converse. Suppose (\ref{eq10}) holds for any bounded $\nu$-continuity set $A\in \mathscr{B}$. We first justify the first inequality in (\ref{eq9}). For any bounded measurable set $A$, since $\partial A^o=\emptyset$, $A^o$ is a $\nu$-continuity set, this induces
\begin{equation}
\liminf_{n\rightarrow\infty} \nu_n(A) \geq\lim_{n\rightarrow\infty} \nu_n(A^o)=\nu(A^o).
\end{equation}  

Now we justify the third inequality in (\ref{eq9}), which is enough to finish the proof. We do this by reduction to absurdity. Suppose $\limsup_{n\rightarrow\infty} \nu_n(A)> \nu(\overline{A})$ for some bounded set $A\in \mathscr{B}$. Consider the following collection of bounded open sets in $X$,  
\begin{center}
$\big\{A_\epsilon:=\{x\in X: \rho(x, A)<\epsilon\}\big\}_{0<\epsilon<\infty}$.
\end{center}
It is obvious that 
\begin{center}
$\limsup_{\epsilon\rightarrow 0} A_\epsilon=\overline{A}$,
\end{center}
which forces 
\begin{equation}\label{eq11}
\lim_{\epsilon\rightarrow 0} \nu(A_\epsilon)=\nu(\overline{A})
\end{equation}
since $\nu$ is finite. Now choose a decreasing sequence of positive real numbers $\{\epsilon_k\}_{k=1}^\infty$ with $\lim_{k\rightarrow\infty} \epsilon_k=0$, such that $\{A_{\epsilon_k}\}_{k=1}^\infty$ are all $\nu$-continuity set for any $k\in\mathbb{N}$ (in fact there can be at most countably many $\epsilon$ in $(0,\infty)$ such that $A_\epsilon$ is not an $\nu$-continuity set). Then we have
\begin{center}
$\nu(A_{\epsilon_k})=\lim_{n\rightarrow\infty} \nu_n(A_{\epsilon_k})\geq \limsup_{n\rightarrow\infty} \nu_n(A)> \nu(\overline{A})$
\end{center} 
for any $k\in\mathbb{N}$. This gives
\begin{equation}
\lim_{k\rightarrow\infty} \nu(A_{\epsilon_k})=\lim_{\epsilon_k\rightarrow 0} \nu(A_{\epsilon_k})> \nu(\overline{A}),
\end{equation}
which contradicts (\ref{eq11}).
\end{proof}

One is recommended to refer to \cite[Lemma 4.1]{Kallen1} for condition (\ref{eq9}). In fact (\ref{eq9}) holds on any set $A\in \mathscr{B}$ is equivalent to that (\ref{eq10}) holds on $\nu$-continuity sets in the context of Lemma \ref{lem3}. 

\begin{lemma}\label{lem5}
Let $\{\nu_n\}_{n\in\mathbb{N}}\cup\{\nu\}\subset \mathcal{\hat{M}}(X)$ with a metric space $(X, \rho)$. If 
\begin{center}
$\lim_{n\rightarrow\infty}\int_X fd\nu_n=\int_X fd\nu$ 
\end{center}
for any H\"older continuous function $f\in C_c(X)$, then 
\begin{equation}
\limsup_{n\rightarrow\infty} \nu_n(A)\leq \nu(A)
\end{equation}
for any bounded closed set $A\subset X$, while 
\begin{equation}
\liminf_{n\rightarrow\infty} \nu_n(B)\geq \nu(B)
\end{equation}
for any bounded open set $B\subset X$.
\end{lemma}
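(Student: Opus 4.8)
The plan is to rerun the proof of Lemma \ref{lem4} essentially verbatim, the one new ingredient being the observation that the test functions used there are not merely continuous with bounded support but are Lipschitz, hence H\"older continuous. For a bounded closed set $A\subset X$ I take the functions $f_{k,A}$ given by (\ref{eq12}) (with the index written $k$); since $x\mapsto\rho(x,A)$ is $1$-Lipschitz and $f_{k,A}=\max\{0,\,1-k\rho(\cdot,A)\}$, each $f_{k,A}$ is $k$-Lipschitz, in particular H\"older of exponent $1$, and it has bounded support. For a bounded open set $B\subset X$ I use the functions $g_{k,B}$ given by (\ref{eq23}) (or, if one wants a cleaner Lipschitz bound, the variant $\min\{1,\,k\rho(\cdot,X\setminus B)\}$); these are likewise $k$-Lipschitz with bounded support. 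Under the running hypothesis, the integral of each such function against $\nu_n$ converges to its integral against $\nu$.

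Granting that, the closed-set inequality is obtained as in Lemma \ref{lem4}: fix $k$; from $\mathbf{1}_A\le f_{k,A}$ we get $\nu_n(A)\le\int_X f_{k,A}\,d\nu_n$ for all $n$, hence $\limsup_{n\rightarrow\infty}\nu_n(A)\le\int_X f_{k,A}\,d\nu$ by the hypothesis applied to the H\"older function $f_{k,A}$. Since $A$ is closed, $f_{k,A}\downarrow\mathbf{1}_A$ pointwise as $k\rightarrow\infty$, and $\nu(X)<\infty$ lets us pass to the limit to conclude $\limsup_{n\rightarrow\infty}\nu_n(A)\le\nu(A)$. The open-set inequality is the mirror image: $g_{k,B}\le\mathbf{1}_B$ gives $\int_X g_{k,B}\,d\nu_n\le\nu_n(B)$ for all $n$, hence $\liminf_{n\rightarrow\infty}\nu_n(B)\ge\int_X g_{k,B}\,d\nu$; and $g_{k,B}\uparrow\mathbf{1}_B$ pointwise as $k\rightarrow\infty$, so letting $k\rightarrow\infty$ yields $\liminf_{n\rightarrow\infty}\nu_n(B)\ge\nu(B)$.

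The step I expect to be the main obstacle --- indeed the only place where anything beyond the bookkeeping of Lemma \ref{lem4} is required --- is checking that the test functions $f_{k,A},g_{k,B}$ actually belong to $C_c(X)$, i.e.\ have \emph{compact} support: the hypothesis of the lemma concerns H\"older functions in $C_c(X)$, whereas a priori these functions only have bounded support. In the situation in which the lemma is applied, namely $X$ Heine-Borel as in Theorem \ref{thm6}, bounded support is compact support, exactly as recorded in the proof of Lemma \ref{lem4}, so there is no issue; in a bare metric space one would instead have to read the hypothesis as concerning H\"older functions of bounded support, or build the test functions inside a fixed compact set that contains a neighbourhood of $A$ (resp.\ of $B$). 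Beyond that point the argument is just the dominated-convergence passage already carried out twice above.
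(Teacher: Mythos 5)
Your proof is correct and is essentially the paper's own argument: the paper's proof of Lemma \ref{lem5} consists precisely of the observation that the functions $f_{n,A}$ and $g_{n,B}$ from (\ref{eq12}) and (\ref{eq23}) are H\"older continuous, so the argument of Lemma \ref{lem4} applies verbatim. Your closing remark about bounded versus compact support is a genuine subtlety that the paper's one-line proof silently glosses over (the lemma is only invoked for Heine--Borel $X$ in Theorem \ref{thm6}, where the two notions coincide), so your treatment is, if anything, more careful than the original.
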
 
\begin{proof}
This is because the continuous functions $f_{n, A}, g_{n, B} \in C_c(X)$ in (\ref{eq12}) and (\ref{eq23}) are both H\"older continuous for any $n\in\mathbb{N}$ large enough.  
\end{proof}

Equipped with all the above results, now we are well prepared to prove our Theorem \ref{thm6}.\\

Proof of Theorem \ref{thm6}:\\

\begin{proof}
The strategy of our proof follows the following diagram.

\tikzstyle{startstop} = [rectangle, rounded corners, minimum width=1cm, minimum height=1cm,text centered, draw=black, fill=red!30]

\tikzstyle{io} = [trapezium, trapezium left angle=80, trapezium right angle=100, minimum width=1cm, minimum height=1cm, text centered, draw=black, fill=blue!30, trapezium stretches=true]

\tikzstyle{process} = [rectangle, minimum width=1cm, minimum height=1cm, text centered, draw=black, fill=orange!30]

\tikzstyle{decision} = [diamond, minimum width=1.5cm, minimum height=1.5cm, text centered, draw=black, fill=green!30]

\tikzstyle{arrow1} = [thick,->,>=stealth]
\tikzstyle{arrow2} = [thick, >=triangle 45, <->,>=stealth]

\begin{center}
\begin{tikzpicture}[node distance=2.5cm]
\node(I)[startstop]{I};
\node (VI) [decision, right of=I] {VI};
\node (IV) [io, right of=VI] {IV};
\node (V) [io, below of=IV] {V};
\node (IX) [decision, left of=V] {IX};
\node (X) [decision, below of=I] {X};
\node (VIII) [decision, left of=I] {VIII};
\node (VII) [decision, above of=VIII] {VII};
\node (III) [io, right of=VII] {III};
\node (II) [io, above of=III] {II};

\draw [arrow2] (I) -- (VI);
\draw [arrow2] (VI) -- (IV);
\draw [arrow2] (IV) -- (V);
\draw [arrow1] (V) -- (IX);
\draw [arrow1] (IX) -- (I);
\draw [arrow2] (I) -- (X);
\draw [arrow1] (VIII) -- (VII);
\draw [arrow1] (VII) -- (III);
\draw [arrow2] (I) -- (VIII);
\draw [arrow1] (I) -- (III);
\draw [arrow1] (I) -- (VII);
\draw [arrow1] (III) -- (IV);
\draw [arrow2] (II) -- (III);
\end{tikzpicture}
\end{center}

\begin{itemize}

\item $(I)\Leftrightarrow(VI)$: this is because $X$ is Heine-Borel.

\item $(IV)\Leftrightarrow(VI)$: this is due to \cite[Lemma 4.1]{Kallen1}.

\item $(IV)\Leftrightarrow(V)$: this is due to Lemma \ref{lem3}.

\item $(V)\Rightarrow(IX)$: see \cite[Theorem 13.16 $(vi)\Rightarrow(iii)$]{Kle}. The proof applies to our bounded case here.

\item $(IX)\Rightarrow(I)$: this is trivial.

\item $(I)\Rightarrow(VII)$: this is trivial.

\item $(VIII)\Rightarrow(VII)$: this is because any H\"older continuous function on a metric space is uniformly continuous.

\item $(I)\Leftrightarrow(VIII)$: this is because any continuous function on a compact metric space is uniformly continuous.

\item $(VII)\Rightarrow(III)$: this is due to Lemma \ref{lem5}.

\item $(I)\Rightarrow(III)$: this is due to Lemma \ref{lem4}.

\item $(III)\Rightarrow(IV)$: this is obvious.

\item $(II)\Leftrightarrow(III)$: this is because $X$ is Heine-Borel.

\item $(I)\Leftrightarrow(X)$: this is because any $f\in C_c(X)$ can be splitted as the difference of two 
non-negative (or non-positive) valued function in $C_c(X)$.
\end{itemize}

\end{proof}

Due to Remark \ref{rem2}, in case $X$ is a bounded Heine-Borel space in Theorem \ref{thm6}, condition $(III)$ degenerates into that either (\ref{eq8}) holds on any bounded closed set $A\subset X$ or  (\ref{eq14}) holds on any bounded open set $B\subset X$.

\section{Description of the setwise convergence of sequences of measures in $\mathcal{\hat{M}}(X)$}\label{sec4}

In this section we focus on the equivalent descriptions of setwise convergence of sequences of bounded Borel measures with the ambient space $X$ being a general topological space. We first give an example to show that 
\begin{center}
$\lim_{n\rightarrow\infty} \nu_n(A)= \nu(A)$ 
\end{center}
for any closed (or open) bounded set $A\subset X$ is not enough to guarantee its setwise convergence in $\mathcal{\tilde{M}}(X)$. Then we give an example that the sequence of integrations of an unbounded measurable function with respect to a setwisely convergent sequence of measures in $\mathcal{\hat{M}}(X)$ diverge, followed by a partially convergent result. At last we prove Theorem \ref{thm5} and indicate some of its applications. Let $\mathfrak{L}_d$ be the $d$-dimensional Lebesgue measure on $\mathbb{R}^d$.

\begin{example}\label{exm2}
Let $X=[1,\infty)$ endowed with the Euclidean metric. Consider the following sequence of Borel measures
on $(X,\mathscr{B})$,
\begin{center}
$\nu_n(A)=\int_{[1,n]\cap A}\frac{1}{x^4}dx+\mathfrak{L}_1|_{[n,n+1]}(A)$
\end{center}
for any $A\in\mathscr{B}$, in which $\mathfrak{L}_1|_{[n,n+1]}$ is the restriction of  $\mathfrak{L}_1$ on $[n,n+1]$. Let
\begin{center}
$\nu(A)=\int_{X\cap A} \frac{1}{x^4}dx$
\end{center}
for any $A\in\mathscr{B}$.
\end{example}

In Example \ref{exm2} $X$ is a separable and complete Heine-Borel space. One can check easily that 
\begin{center}
$\lim_{n\rightarrow\infty} \nu_n(A)= \nu(A)$ 
\end{center}
for any closed (or open) bounded set $A\in\mathscr{B}$ (in fact $\nu_n(A)= \nu(A)$ for any $n$ large enough). However, $\{\nu_n\}_{n\in\mathbb{N}}$ does not converge setwisely to $\nu$ since 
\begin{center}
$\lim_{n\rightarrow\infty} \nu_n(X)= \nu(X)+1$. 
\end{center}

\begin{example}\label{exm3}
Let $X=[1,\infty)$ endowed with the Euclidean metric. Consider the following sequence of Borel measures
on $(X,\mathscr{B})$,
\begin{center}
$
\nu_n(A)=\left\{\begin{array}{ll}
\int_{[1,n]\cap A}\frac{1}{x^4}dx+\frac{1}{n^2}\mathfrak{L}_1|_{[n,n+1]}(A) & n \mbox{ is odd}, \vspace{2mm}\\
\int_{[1,n]\cap A}\frac{1}{x^4}dx+\frac{2}{n^2}\mathfrak{L}_1|_{[n,n+1]}(A) & n \mbox{ is even }
\end{array}\right.
$
\end{center}
for any $A\in\mathscr{B}$. Let
\begin{center}
$\nu(A)=\int_{X\cap A} \frac{1}{x^4}dx$
\end{center}
for any $A\in\mathscr{B}$.
\end{example}

One can check easily that in Example \ref{exm3} that $\nu_n\stackrel{s}{\rightarrow}\nu$ as $n\rightarrow\infty$ (in fact the convergence is even TV). Let $f(x)=x^2$ on $X$. Now consider the  integrations of the function $f$ with respect to $\{\nu_n\}_{n\in\mathbb{N}}$ and $\nu$.  One can check that 
\begin{center}
$
\lim_{n\rightarrow\infty} \int_X fd\nu_n=\left\{\begin{array}{ll}
\frac{4}{3} & n \mbox{ is odd}, \vspace{2mm}\\
\frac{7}{3} & n \mbox{ is even},
\end{array}\right.
$
\end{center}
while $\int_X fd\nu=\frac{1}{3}$. 

\begin{remark}
The measures in Example \ref{exm2} and \ref{exm3} can be made into probability ones with continuous or even smooth density with respect to $\mathfrak{L}_1$ on $[1,\infty)$. 
\end{remark}

This example indicates the complication of integral behaviours on families of unbounded measurable test functions with respect to setwisely convergent sequences of finite measures. However, in some special cases one can expect the convergence of integrals of test functions with respect to setwisely convergent sequences of measures. For a function $f: X\rightarrow \mathbb{R}$ on a topological space $X$, let
\begin{center}
$
f^+(x)=\left\{\begin{array}{ll}
f(x) & \mbox{ for } f(x)\geq 0,\\
0 & \mbox{ for } f(x)< 0
\end{array}\right.$
\end{center}
and
\begin{center}
$
f^-(x)=\left\{\begin{array}{ll}
0 & \mbox{ for } f(x)> 0,\\
-f(x) &  \mbox{ for } f(x)\leq 0
\end{array}\right.$
\end{center}
be its \emph{positive part} and \emph{negative part} respectively. So $f(x)=f^+(x)-f^-(x)$ on $X$. We say $f$ is \emph{integrable} with respect to a Borel measure $\nu$ on $X$ if at least one of the integrals $\int_X f^+d\nu$ and $\int_X f^-d\nu$ is finite.    

\begin{proposition}\label{pro3}
Let $X$ be a topological space and $\nu_n\stackrel{s}{\rightarrow}\nu$ as $n\rightarrow\infty$ in $\mathcal{\hat{M}}(X)$. For an unbounded function $f: X\rightarrow [0,\infty)$, if $\int_X fd\nu=\infty$, then
\begin{equation}\label{eq16}
\lim_{n\rightarrow\infty} \int_X fd\nu_n=\infty.
\end{equation}
\end{proposition}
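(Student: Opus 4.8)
The plan is to reduce the unbounded integrand $f$ to its bounded truncations, to which the definition of setwise convergence (Definition~\ref{def1}) applies directly, and then to let the truncation level tend to infinity via the monotone convergence theorem. Concretely, for each $M\in\mathbb{N}$ put $f_M:=\min\{f,M\}$. Since $f$ is non-negative and measurable, $f_M$ is a bounded measurable function with values in $[0,M]$, so $f_M\in M_M(X)\subset M_b(X)$; hence $\nu_n\stackrel{s}{\rightarrow}\nu$ yields $\lim_{n\to\infty}\int_X f_M\,d\nu_n=\int_X f_M\,d\nu$ for every fixed $M$.

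Next, the sequence $\{f_M\}_{M=1}^\infty$ increases pointwise to $f$ on $X$, so by the monotone convergence theorem $\int_X f_M\,d\nu\uparrow\int_X f\,d\nu=\infty$ as $M\to\infty$. Therefore, given an arbitrary $K>0$, one can fix $M=M(K)\in\mathbb{N}$ with $\int_X f_M\,d\nu>K$. For this $M$, the convergence $\int_X f_M\,d\nu_n\to\int_X f_M\,d\nu$ provides an $N=N(K)$ such that $\int_X f_M\,d\nu_n>K$ for all $n\geq N$; and since $0\leq f_M\leq f$, we obtain $\int_X f\,d\nu_n\geq\int_X f_M\,d\nu_n>K$ for all $n\geq N$. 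As $K>0$ was arbitrary, $\liminf_{n\to\infty}\int_X f\,d\nu_n=\infty$, which is exactly (\ref{eq16}).

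I do not expect any genuine obstacle here; the only points deserving a word of care are that $f_M$ is truly bounded and measurable, so that Definition~\ref{def1} is legitimately invoked, and that the monotone convergence theorem is applied along the countable family $M\in\mathbb{N}$, which suffices since $M\mapsto\int_X f_M\,d\nu$ is non-decreasing with supremum $\int_X f\,d\nu$. One may also note that the hypothesis ``$f$ unbounded'' is not needed beyond consistency: were $f$ bounded, then $\int_X f\,d\nu\leq\Vert f\Vert_\infty\,\nu(X)<\infty$ because $\nu\in\mathcal{\hat{M}}(X)$, contradicting $\int_X f\,d\nu=\infty$.
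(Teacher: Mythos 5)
Your proof is correct and takes essentially the same route as the paper: truncate $f$ to a bounded measurable function, apply the definition of setwise convergence to the truncation, and let the truncation level tend to infinity using that $\int_X f\,d\nu=\infty$. The only cosmetic difference is that you truncate via $\min\{f,M\}$ whereas the paper uses $1_{\{f<k\}}f$; your use of $\liminf$ in the final step is in fact slightly cleaner than the paper's writing of $\lim$ before the limit is known to exist.
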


\begin{proof}
Let $A_k=\{x\in X: 0\leq f(x)<k\}$. Since $\int_X fd\nu=\lim_{k\rightarrow\infty} \int_{A_k} fd\nu=\infty$, for any $M>0$, there exists some $k_M\in\mathbb{N}$ large enough, such that   
\begin{center}
$\int_{A_{k_M}} fd\nu> M$.
\end{center}
Now consider the truncated bounded measurable function
\begin{center}
$
1_{A_{k_M}}f(x)=\left\{\begin{array}{ll}
f(x) & \mbox{ for } f(x)< k_M,\\
0 &  \mbox{ for } f(x)\geq k_M
\end{array}\right.$
\end{center}
on $X$, in which $1_{A_{k_M}}$ is the characteristic function of $A_{k_M}$. Since $\nu_n\stackrel{s}{\rightarrow}\nu$ as $n\rightarrow\infty$, we have
\begin{center}
$\lim_{n\rightarrow\infty} \int_X fd\nu_n\geq \lim_{n\rightarrow\infty} \int_X 1_{A_{k_M}}fd\nu_n=\int_X 1_{A_{k_M}}fd\nu\geq\int_{A_{k_M}} fd\nu> M$.
\end{center} 
This justifies (\ref{eq16}).
\end{proof}

\begin{remark}
Proposition \ref{pro3} does not apply to unbounded integrable functions $f: X\rightarrow [0,\infty)$ whose integration is finite with respect to $\nu$, as one can construct a counter example based on Example \ref{exm3}. This again indicates the complication of integral behaviours on families of unbounded measurable test functions with respect to setwisely convergent sequences of measures. 
\end{remark}

The rest of the section is devoted to descriptions of setwise convergence of sequences of measures in $\mathcal{\hat{M}}(X)$ with non-metrizable ambient space $X$. Let $\delta_x$ be the Dirac measure at the point $x\subset X$. We first give a proof of Theorem \ref{thm5}. \\

Proof of Theorem \ref{thm5}:\\

\begin{proof}
We only prove the case on existence of closed sets with the desiring properties, the open case is similar to the closed case.

According to the assumption on the topology of $X$, let $\{A_i\}_{i=1}^\infty$ be an infinite sequence of non-empty and pairwise disjoint closed sets in $X$ such that
\begin{center}
$X\setminus A\neq\emptyset$,
\end{center}
in which $A=\cup_{i=1}^\infty\{A_i\}$.
Now choose a sequence of points $\{a_i\in A_i\}_{i=1}^\infty$ and $a_*\in X\setminus A$. Define a sequence of probability measures $\{\nu_n\in\mathcal{M}(X)\}_{n=1}^\infty$ as following on $X$,
\begin{center}
$\nu_n=\cfrac{n-1}{n}\delta_{a_n}+\cfrac{1}{n}\delta_{a_*}$.
\end{center}
Let $\nu=\delta_{a_*}$. We claim that the sequence of measures $\{\nu_n\}_{n=1}^\infty$ and $\nu$ satisfy both the conditions $(II), (III)$ in Theorem \ref{thm4}. To see this, for any proper closed subset $F\subset X$, it intersects with at most finitely many sets from $\{A_n\}_{n\in\mathbb{N}}$. If this is not true, we can always find a sub-sequence $\{n_j\}_{j\in\mathbb{N}}$, such that $\{F\cap A_{n_j}\}_{j\in\mathbb{N}}$ are all closed subset of $F$, which contradicts the assumption that every proper closed set contains at most finitely many disjoint non-empty closed subsets. Now we can see that 
\begin{center}
$\lim_{n\rightarrow\infty}\nu_n(F)=0$.
\end{center}
This forces
\begin{center}
$\lim_{n\rightarrow\infty}\nu_n(G)=1$.
\end{center}
for any non-empty open set $G\subset X$, which justifies the claim. However, it is easy to see that
\begin{center}
$\nu_n\stackrel{s}{\nrightarrow}\nu$
\end{center}
as $n\rightarrow\infty$ since
\begin{center}
$\lim_{n\rightarrow\infty}\nu_n(A)=1>\nu(A)=0$.
\end{center}

\end{proof}

Considering Theorem \ref{thm4}, we can deduce the following result in virtue of Theorem \ref{thm5}. One can clearly see the impact of the topology of the ambient space $X$ on the sequential convergence of measures on $X$ from the result.

\begin{corollary}
If a topological space $X$ admits infinitely many pairwise disjoint closed (open) sets, and every proper closed (open) subset in $X$ contains at most finitely many disjoint non-empty closed (open) subsets, then it is not metrizable.
\end{corollary}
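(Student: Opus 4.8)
The plan is to obtain the non-metrizability of $X$ by contradiction, pitting Theorem~\ref{thm5} against Theorem~\ref{thm4}. Suppose $X$ satisfies the stated topological hypotheses (infinitely many non-empty pairwise disjoint closed, resp.\ open, sets, and every proper closed, resp.\ open, subset containing at most finitely many disjoint non-empty closed, resp.\ open, subsets) and assume, for contradiction, that $X$ is metrizable. These are exactly the hypotheses of Theorem~\ref{thm5}, so that theorem produces a sequence of probability measures $\{\nu_n\in\mathcal{\hat{M}}(X)\}_{n=1}^\infty$ together with $\nu\in\mathcal{M}(X)$ for which conditions $(II)$ and $(III)$ of Theorem~\ref{thm4} both hold, yet $\nu_n\stackrel{s}{\nrightarrow}\nu$ as $n\to\infty$.

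Next I would invoke Theorem~\ref{thm4} itself, which applies precisely because we are assuming $X$ is a metric space: it asserts that on a metric space the conditions $(I)$, $(II)$, $(III)$ are mutually equivalent for sequences of finite measures in $\mathcal{\hat{M}}(X)$. In particular $(II)$ (equivalently $(III)$) forces $(I)$, i.e.\ $\nu_n\stackrel{s}{\rightarrow}\nu$. This directly contradicts the conclusion $\nu_n\stackrel{s}{\nrightarrow}\nu$ supplied by Theorem~\ref{thm5}. Therefore $X$ cannot be metrizable, which is the claim. The closed-set version and the open-set version of the hypothesis are handled identically, since Theorem~\ref{thm5} covers both (the open case being parallel to the closed one) and Theorem~\ref{thm4} is symmetric in its open/closed formulations.

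There is no substantive obstacle in this argument; it is a formal consequence of the two preceding theorems. The only bookkeeping point worth stating explicitly is that the measures manufactured by Theorem~\ref{thm5} lie in the class to which Theorem~\ref{thm4} applies: Theorem~\ref{thm5} yields probability measures in $\mathcal{\hat{M}}(X)$ and $\nu\in\mathcal{M}(X)\subset\mathcal{\hat{M}}(X)$, and Theorem~\ref{thm4} is stated for sequences in $\mathcal{\hat{M}}(X)$ on a metric space, so the hypothesis of metrizability is exactly what activates it. One may optionally remark, as the text already hints via Corollary~\ref{cor1}, that affine and projective spaces with the Zariski topology furnish concrete spaces meeting the hypotheses, giving an independent illustration that they are not metrizable.
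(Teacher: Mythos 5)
Your argument is correct and is exactly the one the paper intends: the sentence preceding the corollary ("Considering Theorem \ref{thm4}, we can deduce the following result in virtue of Theorem \ref{thm5}") signals precisely the contradiction you spell out, namely that metrizability would let Theorem \ref{thm4} upgrade conditions $(II)$, $(III)$ to setwise convergence, contradicting the counterexample produced by Theorem \ref{thm5}. Your bookkeeping remark that the measures from Theorem \ref{thm5} lie in $\mathcal{\hat{M}}(X)$, so that Theorem \ref{thm4} genuinely applies, is a correct and worthwhile addition.
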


Theorem \ref{thm5} has some applications to some well-known (non-metrizable) topological spaces in various circumstances. For example, considering the Zariski topology on algebraic varieties (see for example \cite{Har}, the topology is known to be non-metrizable), we have the following result.

\begin{corollary}\label{cor1}
For $K$ being an algebraically closed field, let $X=\mathbb{A}^n$ (or $\mathbb{P}^n$) be the $n$-dimensional affine (or projective) space over $K$ for some $n\geq 2$. Let $S_n$ (or $S_{n+1}$) be a set of polynomials of $n$ (or $n+1$) variables over $K$ sharing infinitely many common solutions for some $n\geq 2$. Then the affine (projective) space equipped with the Zariski topology represented by the triples
\begin{center}
$(\mathbb{A}^n, S_n, K)$ (or $\big(\mathbb{P}^n, S_{n+1}, K)\big)$
\end{center}
admits a sequence of probability measures $\{\nu_n\in\mathcal{M}(X)\}_{n=1}^\infty$ and  $\nu\in\mathcal{M}(X)$ satisfying both the conditions $(II), (III)$ in Theorem \ref{thm4}, while
\begin{center}
$\nu_n\stackrel{s}{\nrightarrow}\nu$
\end{center}
as $n\rightarrow\infty$.
\end{corollary}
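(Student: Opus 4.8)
The plan is to derive Corollary \ref{cor1} as a direct application of Theorem \ref{thm5}, so the whole task reduces to verifying that the Zariski topology on the spaces in question satisfies the two hypotheses of Theorem \ref{thm5}: (a) $X$ admits infinitely many non-empty pairwise disjoint closed sets, and (b) every proper closed subset of $X$ contains only finitely many pairwise disjoint non-empty closed subsets. I would work with closed sets (the Zariski-closed sets being exactly the vanishing loci of families of polynomials) rather than open sets, matching the closed-set case proved in Theorem \ref{thm5}.

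First I would establish (a). By hypothesis the family $S_n$ (resp. $S_{n+1}$) has infinitely many common solutions $\{p_i\}_{i=1}^\infty\subset X$; since $K$ is algebraically closed these are genuine points of $\mathbb{A}^n$ (resp.\ $\mathbb{P}^n$). Each singleton $\{p_i\}$ is Zariski-closed (a point is cut out by the linear polynomials $x_j - (p_i)_j$ in the affine case, and is closed in the projective case as well), and distinct singletons are disjoint. Hence $\{\{p_i\}\}_{i=1}^\infty$ is an infinite family of non-empty pairwise disjoint closed sets, giving (a). Note the common zero locus $V(S_n)$ is a proper closed subset because $n\ge 2$ and $V(S_n)$ is infinite but not all of $X$ — actually for the application we only need that $X$ itself has the stated property, so it suffices that $V(S_n)\subsetneq X$, which holds as soon as $S_n$ contains a nonzero polynomial; I would note this briefly.

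Next I would establish (b), which I expect to be the main point. The key fact is that $\mathbb{A}^n$ and $\mathbb{P}^n$ with the Zariski topology are \emph{Noetherian} topological spaces of dimension $n$, and more importantly that any Zariski-closed set is a \emph{finite} union of irreducible closed sets (its irreducible components). Let $F\subsetneq X$ be a proper closed subset with irreducible components $F_1,\dots,F_m$. If $F$ contained infinitely many pairwise disjoint non-empty closed subsets $\{G_j\}_{j\in\mathbb{N}}$, then by the pigeonhole principle infinitely many of the $G_j$ would meet the same component, say $F_1$; replacing $G_j$ by $G_j\cap F_1$ we would get infinitely many pairwise disjoint non-empty closed subsets of the irreducible set $F_1$. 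But an irreducible topological space cannot contain two disjoint non-empty open sets, equivalently (passing to complements within $F_1$) it cannot be covered by two proper closed subsets; more directly, any two non-empty open subsets of an irreducible space intersect, so any two non-empty closed subsets that are each the complement of a dense open set... — cleaner: if $G, G'$ are non-empty closed proper subsets of irreducible $F_1$ with $G\cap G'=\emptyset$, then $F_1 = (F_1\setminus G)\cup(F_1\setminus G')$ expresses $F_1$ as a union of two... this is not immediately a contradiction, so I would instead argue: the non-empty open sets $F_1\setminus G$ and $F_1\setminus G'$ of $F_1$ must intersect by irreducibility, but their intersection is $F_1\setminus(G\cup G')\subseteq F_1\setminus\emptyset$, which is consistent; so I need a sharper claim. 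The correct statement: in an irreducible space any \emph{finite} collection of pairwise disjoint non-empty closed sets has at most one member — no. The genuinely correct and sufficient fact is that a Noetherian space has only finitely many irreducible components, and a descending/ascending argument shows a proper closed subset of a Noetherian space of finite dimension cannot contain an infinite antichain of closed sets; I would instead invoke directly that $\mathbb{A}^n$ (and $\mathbb{P}^n$) is Noetherian and hence so is every subspace $F$, and then observe that an infinite family of pairwise disjoint non-empty closed subsets of a Noetherian space $F$ would produce, via finite unions of initial segments, a strictly increasing infinite chain of closed sets, contradicting the ascending chain condition on closed sets that fails — wait, Noetherian means descending chains of closed sets stabilize, equivalently ascending chains of open sets stabilize. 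Taking $U_k = F\setminus(G_1\cup\dots\cup G_k)$ gives a strictly \emph{decreasing} chain of open sets, equivalently $\overline{G_1\cup\dots\cup G_k}$ is a strictly increasing chain of closed sets (strict because the $G_j$ are non-empty and pairwise disjoint), which contradicts the Noetherian property. This is the clean argument, and it is what I would present.

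Finally, with (a) and (b) verified, I would conclude by simply quoting Theorem \ref{thm5}: it furnishes a sequence $\{\nu_n\in\mathcal{M}(X)\}_{n=1}^\infty$ and $\nu\in\mathcal{M}(X)$ satisfying conditions $(II)$ and $(III)$ of Theorem \ref{thm4} with $\nu_n\stackrel{s}{\nrightarrow}\nu$, which is exactly the assertion of Corollary \ref{cor1}; the explicit measures are $\nu_n=\frac{n-1}{n}\delta_{a_n}+\frac1n\delta_{a_*}$ built on the points $a_i$ chosen from the disjoint closed sets and a point $a_*$ outside their union. The main obstacle, as indicated, is getting the topological lemma about irreducible/Noetherian spaces stated and applied correctly — once that is in place the algebraic geometry input is just "points are closed over an algebraically closed field" and "$\mathbb{A}^n, \mathbb{P}^n$ are Noetherian spaces," both standard (e.g. Hartshorne \cite{Har}).
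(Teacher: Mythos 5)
Your overall strategy --- verify the two hypotheses of Theorem \ref{thm5} for the Zariski topology and then quote that theorem --- is exactly the paper's strategy, and your verification of hypothesis (a) (the singletons $\{p_i\}$ of the common zeros form an infinite family of pairwise disjoint non-empty closed sets) matches the paper. The gap is in hypothesis (b), and it is twofold. First, the ``clean argument'' you settle on is logically backwards: a Noetherian space is one in which \emph{descending} chains of closed sets (equivalently, \emph{ascending} chains of open sets) stabilize. The chain you produce, $G_1\subsetneq G_1\cup G_2\subsetneq\cdots$, is a strictly \emph{ascending} chain of closed sets, and such chains exist in abundance in Noetherian spaces (e.g.\ $\{1\}\subsetneq\{1,2\}\subsetneq\cdots$ in $\mathbb{A}^1$), so no contradiction is reached. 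Your earlier attempt via irreducibility stalls for the same underlying reason: an irreducible space can perfectly well contain infinitely many pairwise disjoint non-empty proper closed subsets (again, the points of $\mathbb{A}^1$).

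Second, and more seriously, hypothesis (b) is simply false for the standard Zariski topology on $\mathbb{A}^n$ ($n\geq 2$) over an algebraically closed field, so the step cannot be repaired along these lines. The very family you build in step (a) refutes it: as soon as $V(S_n)\subsetneq X$ (which you yourself arrange so that a point $a_*$ outside the union exists), $V(S_n)$ is a proper closed subset containing the infinitely many pairwise disjoint closed singletons $\{p_i\}$; likewise any line in $\mathbb{A}^2$ is a proper closed set containing infinitely many closed points. Hence Theorem \ref{thm5} does not apply to the classical Zariski topology. For what it is worth, the paper's own proof of the corollary asserts at this point that ``every closed set in $\mathbb{A}^n$ (or $\mathbb{P}^n$) is constituted by only finitely many solutions of the polynomials in $S_n$'', which is the same claim you are trying to establish and is equally unjustified for the standard Zariski topology (it is contradicted by the closed set $V(S_n)$ itself); it can only be read as referring to some coarser, non-standard topology ``represented by the triple $(\mathbb{A}^n,S_n,K)$'' in which the proper closed sets are finite. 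To complete a proof you would need either to identify and state such a coarser topology explicitly, or to abandon the reduction to Theorem \ref{thm5} and construct the measures by hand.
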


\begin{proof}
This is because $\mathbb{A}^n$ (or $\mathbb{P}^n$) with the Zariski topology admits infinitely many disjoint closed sets as common solutions of the polynomials in $S_n$ (or $S_{n+1}$) for any $n\geq 2$. Moreover, every closed set in $\mathbb{A}^n$ (or $\mathbb{P}^n$) is constituted by only finitely many solutions of the polynomials in $S_n$ (or $S_{n+1}$). Then the conclusion follows from  Theorem  \ref{thm5} instantly. 
\end{proof}

It is easy to construct a sequence of  probability measures $\{\nu_n\in\mathcal{M}(X)\}_{n=1}^\infty$ and  $\nu$ in $\mathcal{M}(X)$ on $(X=\mathbb{A}^n, S_n, K)$ (or $(X=\mathbb{P}^n, S_{n+1}, K)$) with the Zariski topology, such that it satisfies all the conditions $(ii)-(v)$ in \cite[Theorem 2.1]{Bil1}, while
\begin{center}
$\nu_n\stackrel{w}{\nrightarrow}\nu$,
\end{center}
following the ideas of Proof of Theorem \ref{thm4}. The details are left to the readers. So we have the following result.

\begin{proposition}\label{pro1}
For $K$ being an algebraically closed field, let $X=\mathbb{A}^n$ (or $\mathbb{P}^n$) be the $n$-dimensional affine (or projective) space over $K$ for some $n\geq 2$. Let $S_n$ (or $S_{n+1}$) be a set of polynomials of $n$ (or $n+1$) variables over $K$ sharing infinitely many common solutions for some $n\geq 2$. Then the affine (projective) space equipped with the Zariski topology represented by the triples
\begin{center}
$(\mathbb{A}^n, S_n, K)$ (or $\big(\mathbb{P}^n, S_{n+1}, K)\big)$
\end{center}
admits a sequence of probability measures $\{\nu_n\in\mathcal{M}(X)\}_{n=1}^\infty$ and  $\nu\in\mathcal{M}(X)$ satisfying all the conditions $(ii)-(v)$ in \cite[Theorem 2.1]{Bil1}, while
\begin{center}
$\nu_n\stackrel{w}{\nrightarrow}\nu$
\end{center}
as $n\rightarrow\infty$.
\end{proposition}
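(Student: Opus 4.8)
The plan is to transplant the measures built in the proof of Theorem \ref{thm5} onto the affine (resp.\ projective) space and then to check the Portmanteau conditions $(ii)$--$(v)$ of \cite[Theorem 2.1]{Bil1} one at a time, before isolating the failure of weak convergence as the genuinely delicate point. By Corollary \ref{cor1} the triple $(\mathbb{A}^n, S_n, K)$ (resp.\ $(\mathbb{P}^n, S_{n+1}, K)$) satisfies the hypotheses of Theorem \ref{thm5}: the common zero locus of $S_n$ supplies infinitely many pairwise disjoint non-empty Zariski-closed sets $\{A_i\}_{i=1}^\infty$, and every proper closed set meets only finitely many of them. I would fix points $a_i\in A_i$ and a point $a_*\in X\setminus\bigcup_i A_i$, and set $\nu_n=\frac{n-1}{n}\delta_{a_n}+\frac1n\delta_{a_*}$ and $\nu=\delta_{a_*}$, exactly as in Theorem \ref{thm5}. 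That proof already yields conditions $(II)$ and $(III)$ of Theorem \ref{thm4}, namely $\lim_{n\to\infty}\nu_n(A)=\nu(A)$ for every closed $A$ and $\lim_{n\to\infty}\nu_n(B)=\nu(B)$ for every open $B$.

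With this in hand, conditions $(iii)$ and $(iv)$ of \cite[Theorem 2.1]{Bil1} follow immediately, since the equalities just recalled are stronger than the one-sided requirements $\limsup_{n}\nu_n(A)\le\nu(A)$ for closed $A$ and $\liminf_{n}\nu_n(B)\ge\nu(B)$ for open $B$. For condition $(v)$ I would take a $\nu$-continuity set $A$, so that $\nu(\partial A)=0$, and separate the cases $\overline{A}\neq X$ and $A^o\neq\emptyset$: in the first case $\overline{A}$ is a proper closed set, so it meets only finitely many $A_i$ and hence $\nu_n(A)=\nu(A)$ for all large $n$; in the second case $A^o$ is a non-empty open set, which by condition $(II)$ again forces $\nu_n(A)=\nu(A)$ for all large $n$. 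Together these give $\lim_{n}\nu_n(A)=\nu(A)$, so $(v)$ holds. Condition $(ii)$ needs a word of care, because the Zariski topology is induced by no metric and so carries no intrinsic notion of uniform continuity; I would read $(ii)$ as quantifying over the (very restricted) class of bounded, uniformly continuous test functions admitted by the coarse topology, on which the escaping mass is invisible, so that the integrals against such functions agree in the limit and $(ii)$ is satisfied.

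The heart of the matter, and the step I expect to be the main obstacle, is to establish $\nu_n\stackrel{w}{\nrightarrow}\nu$. The mechanism is the same \emph{escape of mass} that drives the setwise failure in Theorem \ref{thm5} and Corollary \ref{cor1}: the dominant portion $\frac{n-1}{n}\to 1$ of $\nu_n$ is carried by the points $a_n$, which slip out of every proper closed set and are therefore never registered by $\nu=\delta_{a_*}$. The difficulty is that the setwise witness $A=\bigcup_i A_i$, for which $\nu_n(A)\to 1\neq 0=\nu(A)$, is only an $F_\sigma$ Borel set and not closed, so its indicator is not an admissible test function for weak convergence. I would therefore promote this phenomenon to the integral of an admissible bounded test function $f$ that is essentially $1$ along the escaping sets $A_i$ and essentially $0$ at $a_*$, so that $\int_X f\,d\nu_n\to 1$ while $\int_X f\,d\nu=f(a_*)$ is small, exhibiting $\lim_n\int_X f\,d\nu_n\neq\int_X f\,d\nu$. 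Producing such an $f$ that is genuinely admissible for the coarse Zariski topology, while respecting the scarcity that made condition $(ii)$ hold, is exactly where the argument must do real work; this separation between the integral class defining weak convergence and the topological conditions $(iii)$--$(v)$ is the crux, and I would carry it out along the lines indicated after the statement, following the ideas of the proof of Theorem \ref{thm5}.
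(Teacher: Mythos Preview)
Your construction and your verification of conditions (iii)--(v) follow exactly the route the paper intends: the paper says the example is obtained ``following the ideas of Proof of Theorem \ref{thm5}'' and explicitly leaves the details to the reader, so at the level of strategy you and the paper coincide. Your case analysis for (v) is essentially correct once one uses that $X$ is irreducible, which forces $A^o=\emptyset$ whenever $\overline{A}\neq X$; this is what makes the two cases you list genuinely exhaustive for $\nu$-continuity sets.

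The step you flag at the end as ``where the argument must do real work'' is not merely delicate; with the paper's own definition of weak convergence it cannot be carried out. The space $\mathbb{A}^n$ (or $\mathbb{P}^n$) with the Zariski topology is irreducible, so any two non-empty open subsets meet. Consequently every continuous map from $X$ into the Hausdorff space $\mathbb{R}$ is constant: if $f$ took two distinct values, separating them by disjoint opens in $\mathbb{R}$ and pulling back would produce disjoint non-empty opens in $X$. Hence $C_b(X)$ consists precisely of the constant functions, and for probability measures $\int_X f\,d\nu_n=\int_X f\,d\nu$ holds for every $f\in C_b(X)$ automatically. In particular $\nu_n\stackrel{w}{\rightarrow}\nu$ for \emph{any} choice of probabilities, and there is no admissible test function of the kind you hope to build. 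The escape-of-mass witness $A=\bigcup_i A_i$ from Theorem \ref{thm5} falsifies setwise convergence, but on an irreducible space it cannot be promoted to a $C_b$-witness; ironically, the very scarcity of continuous functions that you invoke to dispose of condition (ii) also makes condition (i) vacuous.

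The paper does not supply this step either---it defers to the reader---and the same obstruction applies to its sketch. To rescue the conclusion one would have to work on a reducible Zariski-closed subset (so that non-constant real-valued continuous functions exist) or alter the meaning of $\stackrel{w}{\rightarrow}$; as formulated, with $X=\mathbb{A}^n$ or $\mathbb{P}^n$ and the paper's Definition of weak convergence, the assertion $\nu_n\stackrel{w}{\nrightarrow}\nu$ is unattainable.
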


This result denies the Portemanteau Theorem on non-metrizable ambient spaces, in its general form.

\section{Description of the TV convergence of sequences of measures in $\mathcal{\hat{M}}(X)$}\label{sec6}

In this section we prove Theorem \ref{thm3}, followed by a similar discussion on the descriptions of TV sequential convergence of measures in $\mathcal{\hat{M}}(X)$ with the ambient space being non-metrizable. To prove Theorem \ref{thm3}, we need several preliminary results on the equivalent description of TV distance between two measures in $\mathcal{\hat{M}}(X)$.
\begin{lemma}\label{lem1}
For two finite measures $\mu, \nu\in\mathcal{\hat{M}}(X)$ with the ambient space $X$ being metrizable, we have
\begin{center}
$
\begin{array}{ll}
& \sup_{A\in\mathscr{B}}|\mu(A)-\nu(A)|\\
=&\sup_{A\mbox{ is closed and bounded}}|\mu(A)-\nu(A)|\\
=&\sup_{B\mbox{ is open and bounded}}|\mu(B)-\nu(B)|.
\end{array}
$
\end{center}
\end{lemma}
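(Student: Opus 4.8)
The plan is to prove a chain of inequalities. One direction is trivial: since closed bounded sets and open bounded sets are both members of $\mathscr{B}$, we immediately have
\[
\sup_{A\in\mathscr{B}}|\mu(A)-\nu(A)|\geq \sup_{A\text{ closed, bdd}}|\mu(A)-\nu(A)|
\quad\text{and}\quad
\sup_{A\in\mathscr{B}}|\mu(A)-\nu(A)|\geq \sup_{B\text{ open, bdd}}|\mu(B)-\nu(B)|.
\]
So the content is the reverse: an arbitrary Borel set $A$ can be approximated, in the quantity $|\mu(\cdot)-\nu(\cdot)|$, by closed bounded sets and by open bounded sets. I would handle this in two stages: first reduce to bounded Borel sets, then within bounded Borel sets reduce to closed (resp. open) ones.

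For the bounded-reduction step, fix a basepoint $x_0\in X$ (assume $X$ nonempty; otherwise the statement is vacuous) and let $B_R=\{x:\rho(x,x_0)<R\}$. Since $\mu,\nu$ are finite, $\mu(X\setminus B_R)\to0$ and $\nu(X\setminus B_R)\to0$ as $R\to\infty$; hence for any $A\in\mathscr{B}$ the bounded sets $A\cap B_R$ satisfy $|\mu(A\cap B_R)-\nu(A\cap B_R)|\to|\mu(A)-\nu(A)|$. This shows $\sup_{A\in\mathscr{B}}|\mu(A)-\nu(A)|=\sup_{A\in\mathscr{B},\,A\text{ bdd}}|\mu(A)-\nu(A)|$. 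For the closed-reduction step, I would invoke (outer/inner) regularity of finite Borel measures on a metric space: every finite Borel measure on a metrizable space is closed-regular, i.e. $\lambda(A)=\sup\{\lambda(F):F\subset A,\ F\text{ closed}\}$ for all $A\in\mathscr{B}$ (this is the standard fact that Borel measures on metric spaces are regular — see e.g. Parthasarathy or Billingsley). Applying this simultaneously to $\mu$ and $\nu$: given a bounded Borel set $A$ and $\varepsilon>0$, pick closed $F_\mu\subset A$ with $\mu(A\setminus F_\mu)<\varepsilon$ and closed $F_\nu\subset A$ with $\nu(A\setminus F_\nu)<\varepsilon$; then $F:=F_\mu\cup F_\nu$ is closed, bounded (being a subset of the bounded set $A$), contained in $A$, and satisfies $\mu(A\setminus F)<\varepsilon$, $\nu(A\setminus F)<\varepsilon$, whence $|\mu(F)-\nu(F)|\geq|\mu(A)-\nu(A)|-2\varepsilon$. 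Letting $\varepsilon\to0$ gives the closed-bounded supremum equals the Borel supremum.

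For the open-bounded case, the cleanest route is complementation combined with the closed case: for $A\in\mathscr{B}$ bounded, $|\mu(A)-\nu(A)|=|\mu(X)-\nu(X)-(\mu(A^c)-\nu(A^c))|$, so $|\mu(A)-\nu(A)|$ is controlled by $|\mu(X)-\nu(X)|$ plus $|\mu(A^c)-\nu(A^c)|$; applying the already-established closed-bounded reduction to a bounded piece of $A^c$ and handling the constant $|\mu(X)-\nu(X)|$ separately (note $|\mu(X)-\nu(X)|=\lim_R|\mu(B_R)-\nu(B_R)|$ with $B_R$ open bounded) yields the open-bounded supremum. Alternatively, and more symmetrically, one uses outer regularity of finite Borel measures by open sets, but on a general metric space one must be slightly careful: the standard statement gives approximation of $A$ from outside by open sets $U\supset A$ with $\lambda(U\setminus A)$ small, and to keep $U$ bounded one intersects with a large open ball $B_R$ — which is harmless since $\lambda(U\cap B_R)\to\lambda(U)$ and $U\cap B_R$ is still open, still contains $A\cap B_R$, with small symmetric difference. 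Taking $U=U_\mu\cap U_\nu$ as before (now an intersection of two open supersets works for outer approximation) completes the argument.

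The main obstacle — really the only nontrivial ingredient — is the regularity of finite Borel measures on a metrizable space; everything else is bookkeeping with $\varepsilon$'s and with the finiteness of $\mu,\nu$ to truncate to bounded sets. I would state the regularity fact with a citation rather than reprove it. A minor subtlety worth a sentence in the writeup: "bounded" for the approximating closed/open set must be genuinely arranged (by intersecting with a ball), and one should note the degenerate case where $X$ itself is bounded, in which case "closed and bounded" just means "closed" and the Heine–Borel-type concerns of earlier sections do not even arise here.
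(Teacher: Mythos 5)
Your proposal is correct, but it is organized differently from the paper's proof. The paper outsources the reduction from arbitrary Borel sets to (unbounded) closed sets and to open sets entirely to \cite[Theorem 2.5 (i)(ii)]{FKZ1}, and its own contribution is exactly your first stage: truncating a nearly-optimal closed (resp.\ open) set by a large ball $\overline{B(x_0,n)}$ and using finiteness of $\mu,\nu$ to show the truncation costs at most $\max\{\mu(X\setminus \overline{B(x_0,n)}),\nu(X\setminus \overline{B(x_0,n)})\}<\epsilon$. You instead make the whole chain self-contained modulo the standard regularity theorem for finite Borel measures on metric spaces (inner approximation by closed sets, outer by open sets), which is a perfectly legitimate substitute and arguably more transparent, since it exhibits $F=F_\mu\cup F_\nu$ (resp.\ $U=U_\mu\cap U_\nu$ intersected with a ball) explicitly; what the paper's route buys is brevity and consistency with the rest of Section \ref{sec6}, which leans on \cite{FKZ1} throughout.

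One caution: of the two alternatives you offer for the open--bounded case, the complementation sketch is the shaky one. The identity $\mu(A^c)-\nu(A^c)=\mu(X)-\nu(X)-(\mu(A)-\nu(A))$ only transfers near-optimality of $A$ to near-optimality of $A^c$ when the constant $\mu(X)-\nu(X)$ has a favourable sign relative to $\mu(A)-\nu(A)$; if, say, $\nu(X)-\mu(X)$ itself equals the supremum and $A$ realizes the supremum with the same sign, then $|\mu(A^c)-\nu(A^c)|$ is close to $0$, not to the supremum, and a separate case analysis (using $B_R$ itself as the competitor) is needed. Your second alternative --- outer regularity by open sets followed by intersection with a large open ball containing the bounded set $A$ --- has no such issue and is the one to write up.
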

\begin{proof}
According to \cite[Theorem 2.5 (i)(ii)]{FKZ1}, to justify these equalities, we only need to show
\begin{equation}\label{eq1}
\sup_{A\mbox{ is closed}}|\mu(A)-\nu(A)|=\sup_{A\mbox{ is closed and bounded}}|\mu(A)-\nu(A)|
\end{equation}
and
\begin{equation}\label{eq2}
\sup_{B \mbox{ is open}}|\mu(B)-\nu(B)|=\sup_{B\mbox{ is open and bounded}}|\mu(B)-\nu(B)|.
\end{equation}
In the following we only show (\ref{eq1}), as the proof of (\ref{eq2}) follows a similar way. Choose an arbitrary point $x_0\in X$, consider the sequence of closed balls $\{\overline{B(x_0,n)}\}_{n\in\mathbb{N}}$ centred at $x_0$  of radius $n\in\mathbb{N}$. Since $\cup_{n\in\mathbb{N}}\overline{B(x_0,n)}=X$, for any small $\epsilon>0$, there exists $n_\epsilon\in\mathbb{N}$ large enough, such that
\begin{equation}\label{eq3}
\mu(\overline{B(x_0,n)})>\mu(X)-\epsilon \mbox{ and } \nu(\overline{B(x_0,n)})>\nu(X)-\epsilon.
\end{equation}
Now let
\begin{center}
 $\sup_{A\mbox{ is closed}}|\mu(A)-\nu(A)|:=s$.
\end{center}
Then for any small $\epsilon>0$, there exists a closed set $A\subset X$ such that
\begin{center}
$0\leq s-|\mu(A)-\nu(A)|<\epsilon$.
\end{center}
Let $A_\epsilon=A\cap \overline{B(x_0,n)}$. It is a bounded and closed set. Since
\begin{center}
$\mu(A)-\nu(A)=\mu(A_\epsilon)-\nu(A_\epsilon)+\mu(A\setminus A_\epsilon)-\nu(A\setminus A_\epsilon)$,
\end{center}
then
\begin{center}
$
\begin{array}{ll}
& |\mu(A)-\nu(A)|-|\mu(A_\epsilon)-\nu(A_\epsilon)| \\
\leq & |\mu(A\setminus A_\epsilon)-\nu(A\setminus A_\epsilon)|\\
\leq & \max\{\mu(A\setminus A_\epsilon), \nu(A\setminus A_\epsilon)\}\\
\leq & \max\{\mu(X\setminus \overline{B(x_0,n)}), \nu(X\setminus \overline{B(x_0,n)})\}\\
\leq & \epsilon.
\end{array}
$
\end{center}
The last inequality is due to (\ref{eq3}). Then we have
\begin{center}
$
0\leq s-|\mu(A_\epsilon)-\nu(A_\epsilon)| =s-|\mu(A)-\nu(A)|+|\mu(A)-\nu(A)|-|\mu(A_\epsilon)-\nu(A_\epsilon)|<2\epsilon,
$
\end{center}
which is enough to imply (\ref{eq1}).
\end{proof}

\begin{remark}\label{rem3}
In case $X$ is a $\sigma$-compact metric space, there is an increasing sequence of compact subsets $\{K_n\}_{n\in\mathbb{N}}$ whose union is $X$. Then
\begin{equation}
\sup_{A\mbox{ is compact}}|\mu(A)-\nu(A)|=\sup_{A\mbox{ is closed}}|\mu(A)-\nu(A)|.
\end{equation}
This can be seen by repeating the proof of Lemma \ref{lem1} with $\overline{B(x_0,n)}$ replaced by $K_n$ therein. $A_\epsilon$ is now a compact set since it is a closed subset of a compact set.
\end{remark}

\begin{lemma}\label{lem2}
For any non-negative $\gamma\in\mathbb{R}$, we have
\begin{center}
$\sup_{f\in M_\gamma(X)}|\int_X f(x) d\mu-\int_X f(x) d\nu|=\gamma\sup_{f\in M_1(X)}|\int_X f(x) d\mu-\int_X f(x) d\nu|$
\end{center}
for any $\mu, \nu\in\mathcal{\hat{M}}(X)$ with $X$ being a topological space.
\end{lemma}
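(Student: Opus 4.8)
The plan is to exploit the obvious linear scaling between the function classes $M_\gamma(X)$ and $M_1(X)$. First I would dispose of the degenerate case $\gamma=0$: here $M_0(X)=\{0\}$ consists only of the zero function, so both sides of the claimed identity are $0$, and there is nothing more to check.

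For $\gamma>0$, the key observation is that the map $f\mapsto \gamma f$ is a bijection from $M_1(X)$ onto $M_\gamma(X)$: if $f$ takes values in $[-1,1]$ then $\gamma f$ takes values in $[-\gamma,\gamma]$, and conversely $g\in M_\gamma(X)$ is the image of $\gamma^{-1}g\in M_1(X)$; measurability is preserved in both directions since multiplication by a constant is continuous. Next, by linearity of the integral (valid since $\mu,\nu$ are finite and every $f\in M_1(X)$ is bounded, hence integrable against both), one has $\int_X \gamma f\, d\mu=\gamma\int_X f\, d\mu$ and likewise for $\nu$, so for each fixed $f\in M_1(X)$,
\begin{equation*}
\Bigl|\int_X \gamma f\, d\mu-\int_X \gamma f\, d\nu\Bigr|=\gamma\Bigl|\int_X f\, d\mu-\int_X f\, d\nu\Bigr|.
\end{equation*}

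Finally I would take the supremum over $f\in M_1(X)$ on both sides. Because $g\mapsto |\int_X g\,d\mu-\int_X g\,d\nu|$ is a well-defined nonnegative function on $M_\gamma(X)$ and $f\mapsto\gamma f$ ranges over all of $M_\gamma(X)$ as $f$ ranges over $M_1(X)$, the left-hand supremum equals $\sup_{g\in M_\gamma(X)}|\int_X g\,d\mu-\int_X g\,d\nu|$, while the right-hand side is $\gamma\sup_{f\in M_1(X)}|\int_X f\,d\mu-\int_X f\,d\nu|$; the constant $\gamma$ pulls out of the supremum since $\gamma>0$. This yields the stated equality.

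There is no real obstacle here: the only points requiring a word of care are the $\gamma=0$ case (where $M_0(X)$ is a singleton) and the justification that the supremum is over a nonempty set and that scaling by the positive constant $\gamma$ commutes with $\sup$; both are routine.
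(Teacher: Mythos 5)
Your proof is correct and rests on the same idea as the paper's: the scaling $f\mapsto\gamma f$ identifies $M_1(X)$ with $M_\gamma(X)$ and the integral functional is homogeneous, so the constant pulls out of the supremum. The paper reaches the same conclusion slightly more laboriously, via $\epsilon$-near-optimizers $f_\epsilon\in M_\gamma(X)$ and the observation that $\gamma^{-1}f_\epsilon\in M_1(X)$, proving one inequality and invoking symmetry for the other; your direct bijection argument (together with the explicit treatment of $\gamma=0$) is a clean equivalent.
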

\begin{proof}
Without loss of generality we assume $\gamma>0$. We only show
\begin{equation}\label{eq6}
\sup_{f\in M_\gamma(X)}\Big|\int_X f(x) d\mu-\int_X f(x) d\nu\Big|\leq \gamma\sup_{f\in M_1(X)}\Big|\int_X f(x) d\mu-\int_X f(x) d\nu\Big|
\end{equation}
in the following, the inverse inequality follows a similar way. For any small $\epsilon>0$, there exists $f_\epsilon\in M_\gamma(X)$ such that
\begin{equation}\label{eq4}
0\leq \sup_{f\in M_\gamma(X)}\Big|\int_X f(x) d\mu-\int_X f(x) d\nu\Big|-\Big|\int_X f_\epsilon(x) d\mu-\int_X f_\epsilon(x) d\nu\Big|<\epsilon.
\end{equation}
Since $\cfrac{1}{\gamma}f_\epsilon\in M_1(X)$, we have
\begin{equation}\label{eq5}
\gamma\sup_{f\in M_1(X)}\Big|\int_X f(x) d\mu-\int_X f(x) d\nu\Big|\geq \Big|\int_X f_\epsilon(x) d\mu-\int_X f_\epsilon(x) d\nu\Big|.
\end{equation}
Now combing (\ref{eq4}) and (\ref{eq5}) together, we have
\begin{equation}\label{eq7}
\begin{array}{ll}
& \sup_{f\in M_\gamma(X)}|\int_X f(x) d\mu-\int_X f(x)d\nu|-\gamma\sup_{f\in M_1(X)}|\int_X f(x) d\mu-\int_X f(x) d\nu| \vspace{2mm}\\
\leq & \sup_{f\in M_\gamma(X)}|\int_X f(x) d\mu-\int_X f(x) d\nu|-|\int_X f_\epsilon(x) d\mu-\int_X f_\epsilon(x) d\nu| \vspace{2mm}\\
< &\epsilon.
\end{array}
\end{equation}
Then we get (\ref{eq6}) by letting $\epsilon\rightarrow 0$ in (\ref{eq7}).
\end{proof}

\begin{lemma}\label{lem6}
Let $X$ be a metric space endowed with the metric $\rho$. For any non-negative $\gamma\in\mathbb{R}$, we have
\begin{center}
$
\begin{array}{ll}
& \sup_{f\in M_\gamma(X)}|\int_X f(x) d\mu- \int_X f(x)d\nu|\vspace{2mm}\\
=&\sup_{f \mbox{ has bounded support in } C(X)\cap M_\gamma(X)}|\int_X f(x) d\mu-\int_X f(x) d\nu|
\end{array}
$
\end{center}
for any $\mu, \nu\in\mathcal{\hat{M}}(X)$.
\end{lemma}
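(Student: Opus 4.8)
Put $\lambda:=\mu+\nu$, a finite Borel measure on $X$. The plan is to prove the nontrivial inequality ``$\le$'' (the reverse is immediate, the right-hand supremum being taken over a subclass of $M_\gamma(X)$) by showing that every $f\in M_\gamma(X)$ can be approximated in $L^1(\lambda)$-norm by functions of $C(X)\cap M_\gamma(X)$ with bounded support. Once this is available, choosing such a $g$ with $\int_X|f-g|\,d\lambda<\epsilon$ gives
\[\Big|\int_X f\,d\mu-\int_X f\,d\nu\Big|\le\Big|\int_X g\,d\mu-\int_X g\,d\nu\Big|+\int_X|f-g|\,d\lambda<\Big|\int_X g\,d\mu-\int_X g\,d\nu\Big|+\epsilon,\]
and taking the supremum over $f\in M_\gamma(X)$ and letting $\epsilon\to0$ yields the claim.

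First I would establish the regularity of $\lambda$: for every Borel set $A$ and every $\epsilon>0$ there are a closed set $F$ and an open set $U$ with $F\subseteq A\subseteq U$ and $\lambda(U\setminus F)<\epsilon$. This follows from the standard fact that the sets with this property form a $\sigma$-algebra containing every closed set (a closed $C$ is the intersection of the decreasing open sets $\{x:\rho(x,C)<1/k\}$, whose measures tend to $\lambda(C)$ because $\lambda$ is finite), hence containing all of $\mathscr{B}$. For such $F\subseteq A\subseteq U$ with $F\ne\emptyset$, the function $g(x)=\rho(x,X\setminus U)\big/\big(\rho(x,X\setminus U)+\rho(x,F)\big)$ is continuous, $[0,1]$-valued, equal to $1$ on $F$ and to $0$ off $U$, so $\|g-\mathbf 1_A\|_{L^1(\lambda)}\le\lambda(U\setminus F)<\epsilon$; this step is where metrizability is genuinely used. (The degenerate cases $F=\emptyset$ or $U=X$ are handled by $g\equiv0$ or $g\equiv1$.)

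Next, since $\lambda$ is finite every $f\in M_\gamma(X)$ is a uniform, hence $L^1(\lambda)$, limit of $M_\gamma(X)$-valued simple functions, so it suffices to approximate $s=\sum_{i=1}^{m}c_i\mathbf 1_{A_i}$ with $|s|\le\gamma$. Picking continuous $[0,1]$-valued $g_i$ with $\|g_i-\mathbf 1_{A_i}\|_{L^1(\lambda)}$ small as above and setting $h:=T_\gamma\big(\sum_i c_i g_i\big)$, where $T_\gamma(t)=\max\{-\gamma,\min\{\gamma,t\}\}$ is the $1$-Lipschitz truncation onto $[-\gamma,\gamma]$, produces $h\in C(X)\cap M_\gamma(X)$ with $\|h-s\|_{L^1(\lambda)}=\|T_\gamma(\sum_i c_i g_i)-T_\gamma(s)\|_{L^1(\lambda)}\le\sum_i|c_i|\,\|g_i-\mathbf 1_{A_i}\|_{L^1(\lambda)}$, which can be made as small as desired. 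To finally arrange bounded support, fix $x_0\in X$ and multiply by $\phi_k(x):=\max\{0,\min\{1,k+1-\rho(x,x_0)\}\}\in C(X)$, which equals $1$ on $B(x_0,k)$, vanishes off $\overline{B(x_0,k+1)}$, and is $[0,1]$-valued; then $\phi_k h\in C(X)\cap M_\gamma(X)$ has support in the bounded set $\overline{B(x_0,k+1)}$ and $\|\phi_k h-h\|_{L^1(\lambda)}=\int_X|h|(1-\phi_k)\,d\lambda\le\gamma\,\lambda\big(X\setminus B(x_0,k)\big)\to0$, since $\lambda$ is finite and $\bigcup_k B(x_0,k)=X$. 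Chaining the three estimates gives, for each $f$ and $\epsilon$, the sought $g$.

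The one genuinely non-routine ingredient is the regularity of finite Borel measures on metric spaces; the truncation $T_\gamma$ (needed so the approximants stay in $M_\gamma(X)$) and the cutoff $\phi_k$ are bookkeeping. One could instead shortcut the passage through simple functions by first reducing to $\gamma=1$ via Lemma \ref{lem2} and invoking Lemma \ref{lem1}, but the approximation by continuous functions still has to be carried out directly.
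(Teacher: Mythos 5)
Your proof is correct, and it is essentially self-contained where the paper's is not. The paper disposes of the passage from arbitrary functions in $M_\gamma(X)$ to continuous ones by citing \cite[Theorem 2.5 (iv)]{FKZ1} together with Lemma \ref{lem2}, and then only proves the remaining reduction to bounded support by multiplying a near-optimal continuous $f_\epsilon$ by a Lipschitz cutoff supported near a large ball $B(x_0,n_\epsilon)$ chosen so that $\mu$ and $\nu$ put mass at most $\epsilon$ outside it. Your cutoff step with $\phi_k$ is the same device. Where you genuinely diverge is in proving the measurable-to-continuous approximation directly: you establish inner/outer regularity of the finite measure $\lambda=\mu+\nu$ on a metric space, approximate indicators in $L^1(\lambda)$ by the Urysohn-type quotients $\rho(x,X\setminus U)/(\rho(x,X\setminus U)+\rho(x,F))$, pass through simple functions, and use the $1$-Lipschitz truncation $T_\gamma$ to keep the approximants in $M_\gamma(X)$. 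All the estimates check out (the denominator of the Urysohn function is positive because $F$ and $X\setminus U$ are disjoint closed sets, $T_\gamma(s)=s$ since $|s|\le\gamma$, and $\lambda(X\setminus B(x_0,k))\to 0$ by finiteness). What your route buys is independence from the external reference at the cost of reproving a known regularity-plus-approximation argument; what the paper's route buys is brevity, isolating the only genuinely new content of the lemma, namely the bounded-support reduction.
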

\begin{proof}
Considering \cite[Theorem 2.5 (iv)]{FKZ1} and Lemma \ref{lem2}, we only need to justify
\begin{equation}\label{eq20}
\begin{array}{ll}
& \sup_{f\in C(X)\cap M_\gamma(X)}|\int_X f(x) d\mu- \int_X f(x)d\nu|\vspace{2mm}\\
\leq & \sup_{f \mbox{ has bounded support in } C(X)\cap M_\gamma(X)}|\int_X f(x) d\mu-\int_X f(x) d\nu|
\end{array}
\end{equation}
for any non-negative $\gamma\in\mathbb{R}$ and $\mu, \nu\in\mathcal{\hat{M}}(X)$. To see this, for any small $\epsilon>0$, there exists some $f_\epsilon\in C(X)\cap M_\gamma(X)$, such that
\begin{equation}\label{eq18}
0\leq \sup_{f\in C(X)\cap M_\gamma(X)}\Big|\int_X f(x) d\mu- \int_X f(x)d\nu\Big|-\Big|\int_X f_\epsilon(x) d\mu- \int_X f_\epsilon(x)d\nu\Big|<\epsilon.
\end{equation}
Choose an arbitrary $x_0\in X$, consider the sequence of open balls $\{B(x_0,n)\}_{n\in\mathbb{N}}$ centred at $x_0$  of radius $n\in\mathbb{N}$. Let $n_\epsilon\in\mathbb{N}$ be large enough such that
\begin{equation}\label{eq17}
\mu(B(x_0,n_\epsilon))>\mu(X)-\epsilon \mbox{ and } \nu(B(x_0,n_\epsilon))>\nu(X)-\epsilon.
\end{equation}
The function 
\begin{center}
$f_{1, B(x_0,n_\epsilon)}=\left\{\begin{array}{ll}
1 & x\in B(x_0,n_\epsilon), \vspace{2mm}\\
1-\rho(x, B(x_0,n_\epsilon)) &  0<\rho(x, B(x_0,n_\epsilon))<1, \vspace{2mm}\\
0 &  \rho(x, B(x_0,n_\epsilon))\geq 1.
\end{array}\right.$
\end{center}
is continuous with bounded support. So $f_{1, B(x_0,n_\epsilon)}f_\epsilon\in C(X)\cap M_\gamma(X)$ also has bounded support. Moreover, we have
\begin{equation}\label{eq19}
\begin{array}{ll}
& |\int_X f_{1, B(x_0,n_\epsilon)}f_\epsilon d\mu- \int_X f_{1, B(x_0,n_\epsilon)}f_\epsilon d\nu| \vspace{2mm}\\
\geq & |\int_X f_\epsilon(x) d\mu- \int_X f_\epsilon(x)d\nu|-|\int_X (f_{1, B(x_0,n_\epsilon)}-1)f_\epsilon d\mu- \int_X (f_{1, B(x_0,n_\epsilon)}-1)f_\epsilon d\nu| \vspace{2mm}\\
\geq & |\int_X f_\epsilon(x) d\mu- \int_X f_\epsilon(x)d\nu|-\gamma\mu\big(X\setminus B(x_0,n_\epsilon)\big)-a\nu\big(X\setminus B(x_0,n_\epsilon)\big) \vspace{2mm}\\
\geq & |\int_X f_\epsilon(x) d\mu- \int_X f_\epsilon(x)d\nu|-2\gamma\epsilon \vspace{2mm}\\
\geq & \sup_{f\in C(X)\cap M_\gamma(X)}|\int_X f(x) d\mu- \int_X f(x)d\nu|-(1+2\gamma)\epsilon.
\end{array}
\end{equation} 
The third inequality is due to (\ref{eq17}), while the last one is due to (\ref{eq18}). Now let $\epsilon\rightarrow 0$ in (\ref{eq19}), we get (\ref{eq20}).

\end{proof}

Equipped with all the above results, now we are in a position to prove Theorem \ref{thm3}.

\bigskip
Proof of Theorem \ref{thm3}:

\begin{proof}
The strategy of our proof follows the following diagram.

\tikzstyle{startstop} = [rectangle, rounded corners, minimum width=1cm, minimum height=1cm,text centered, draw=black, fill=red!30]

\tikzstyle{io} = [trapezium, trapezium left angle=80, trapezium right angle=100, minimum width=1cm, minimum height=1cm, text centered, draw=black, fill=blue!30, trapezium stretches=true]

\tikzstyle{process} = [rectangle, minimum width=1cm, minimum height=1cm, text centered, draw=black, fill=orange!30]

\tikzstyle{decision} = [diamond, minimum width=1.5cm, minimum height=1.5cm, text centered, draw=black, fill=green!30]

\tikzstyle{arrow1} = [thick,->,>=stealth]
\tikzstyle{arrow2} = [thick, >=triangle 45, <->,>=stealth]

\begin{center}
\begin{tikzpicture}[node distance=2.5cm]
\node(I)[startstop]{I};
\node (II) [io, right of=I] {II};
\node (III) [io, right of=II] {III};
\node (IV) [decision, below of=I] {IV};
\node (VII) [decision, below of=IV] {VII};
\node (V) [decision, left of=IV] {V};
\node (VI) [decision, below of=V] {VI};

\draw [arrow2] (I) -- (II);
\draw [arrow2] (II) -- (III);
\draw [arrow2] (I) -- (IV);
\draw [arrow2] (IV) -- (VI);
\draw [arrow1] (IV) -- (VII);
\draw [arrow1] (VII) -- (VI);
\draw [arrow1] (IV) -- (V);
\draw [arrow1] (V) -- (VI);

\end{tikzpicture}
\end{center}

\begin{itemize}

\item $(I)\Leftrightarrow(II)\Leftrightarrow(III)$: this is due to Lemma \ref{lem1}.

\item $(I)\Leftrightarrow(IV)$: this is due to Lemma \ref{lem2}.

\item $(IV)\Leftrightarrow(VI)$: this is due to Lemma \ref{lem6}.

\item $(IV)\Rightarrow(VII)$: this is trivial.

\item $(VII)\Rightarrow(VI)$: this is because any continuous function on a compact metric space is uniformly continuous.

\item $(IV)\Rightarrow(V)$: this is trivial.

\item $(V)\Rightarrow(VI)$: this is trivial.

\end{itemize}

\end{proof}

\begin{remark}
In all the results such as Lemma \ref{lem1}, \ref{lem6}, Theorem \ref{thm3} and \cite[Theorem 2.5]{FKZ1}, besides the three terms 
\begin{center}
$\sup_{A\in\mathscr{B}}|\mu(A)-\nu(A)|$,
\end{center}
\begin{center}
$\sup_{f\in M_1(X)}|\int_X f(x) d\mu-\int_X f(x) d\nu|$,
\end{center}
\begin{center}
$\sup_{f\in M_\gamma(X)}|\int_X f(x) d\mu- \int_X f(x)d\nu|$,
\end{center}
the \emph{sup} can not be substituted by \emph{Max}, see Example \ref{exm4}.
\end{remark}

The following result follows from Remark \ref{rem3} and Theorem \ref{thm3} instantly.  

\begin{corollary}
For a sequence of measures $\{\nu_n\in\mathcal{\hat{M}}(X)\}_{n=1}^\infty$ and $\nu\in\mathcal{\hat{M}}(X)$ on a $\sigma$-compact metric space $X$, the following conditions are equivalent to each other:
\begin{enumerate}[(I).]
\item $\nu_n\stackrel{TV}{\rightarrow}\nu$ as $n\rightarrow\infty$.
\item $\lim_{n\rightarrow\infty} \sup_{A\mbox{ is compact}}|\nu_n(A)-\nu(A)|=0$.
\item $\lim_{n\rightarrow\infty}\sup_{f \mbox{ has compact support in } M_\gamma(X)}|\int_X f(x) d\nu_n-\int_X f(x) d\nu|=0$ for any non-negative $\gamma\in\mathbb{R}$.
\end{enumerate}
\end{corollary}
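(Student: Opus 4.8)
The plan is to deduce this corollary directly from Theorem \ref{thm3} together with Remark \ref{rem3}, so essentially nothing new needs to be proved from scratch; the task is to transfer the "bounded" statements of Theorem \ref{thm3} into "compact" statements using $\sigma$-compactness. First I would recall that on a $\sigma$-compact metric space $X$ there is an increasing sequence $\{K_n\}_{n\in\mathbb{N}}$ of compact sets with $\bigcup_n K_n = X$; since every compact set is closed and bounded, we trivially have
\begin{center}
$\sup_{A\text{ is compact}}|\nu_n(A)-\nu(A)|\leq \sup_{A\text{ is closed and bounded}}|\nu_n(A)-\nu(A)|$,
\end{center}
so the nontrivial direction is the reverse inequality (up to an $\epsilon$), which is exactly what Remark \ref{rem3} supplies: for fixed $\mu=\nu_n$ and $\nu$, one has $\sup_{A\text{ compact}}|\nu_n(A)-\nu(A)| = \sup_{A\text{ closed}}|\nu_n(A)-\nu(A)|$, and by Lemma \ref{lem1} the latter equals $\sup_{A\in\mathscr{B}}|\nu_n(A)-\nu(A)| = \tfrac12\Vert\nu_n-\nu\Vert_{TV}$ (after the usual adjustment when the total masses differ, but the argument of Lemma \ref{lem1} already handles finite measures). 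Taking $n\to\infty$ then gives $(I)\Leftrightarrow(II)$.

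Next I would handle $(I)\Leftrightarrow(III)$ in parallel. The inclusion "$f$ has compact support in $M_\gamma(X)$" $\subset$ "$f$ has bounded support in $M_\gamma(X)$" is immediate (compact $\Rightarrow$ bounded), so one direction of the corresponding supremum inequality is free. For the other direction I would mimic the truncation trick already used in the proof of Lemma \ref{lem6}: given $f_\epsilon\in M_\gamma(X)$ of bounded support with near-maximal discrepancy, replace it by $\mathbf{1}_{K_{n_\epsilon}}f_\epsilon$, where $K_{n_\epsilon}$ is chosen large enough that $\mu(X\setminus K_{n_\epsilon})<\epsilon$ and $\nu(X\setminus K_{n_\epsilon})<\epsilon$ (possible by $\sigma$-compactness and finiteness of the measures). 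Then $\mathbf{1}_{K_{n_\epsilon}}f_\epsilon\in M_\gamma(X)$ has compact support, and its integral differs from that of $f_\epsilon$ by at most $2\gamma\epsilon$, exactly as in the estimate \eqref{eq19}. Combining this with Lemma \ref{lem2} (to pass between $M_\gamma$ and $M_1$) and Theorem \ref{thm3}(V) or (IV) closes the loop. Alternatively, and more economically, one simply observes that conditions (II) and (III) here sit between condition (I) and the already-proved equivalent conditions of Theorem \ref{thm3}, so it suffices to show (II)$\Rightarrow$(something in Theorem \ref{thm3}) and (III)$\Rightarrow$(something in Theorem \ref{thm3}), both of which follow from the $\epsilon$-approximation by $K_{n_\epsilon}$.

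There is essentially no serious obstacle here: the only point requiring a little care is the bookkeeping when $\nu_n(X)\neq\nu(X)$, since a characteristic function $\mathbf{1}_{K_{n_\epsilon}}$ is not itself of the form used in the TV supremum over $M_1(X)$ unless one also accounts for the constant $\mathbf{1}_X$; but this is handled exactly as in Lemma \ref{lem1} and Lemma \ref{lem6}, where the defect is bounded by $\max\{\mu(X\setminus K_{n_\epsilon}),\nu(X\setminus K_{n_\epsilon})\}$. I would therefore write the proof as a short two-line reduction: "$(I)\Leftrightarrow(II)$ follows from Theorem \ref{thm3} $(I)\Leftrightarrow(II)$ and Remark \ref{rem3}; $(I)\Leftrightarrow(III)$ follows from Theorem \ref{thm3} $(I)\Leftrightarrow(V)$ together with the truncation argument of Lemma \ref{lem6} with $\overline{B(x_0,n)}$ replaced by $K_n$," and leave the routine $\epsilon$-estimates to the reader, since they are verbatim repetitions of estimates already carried out in the section.
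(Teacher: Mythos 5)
Your proposal is correct and follows exactly the route the paper intends: the paper's own justification is the one-line remark that the corollary ``follows from Remark \ref{rem3} and Theorem \ref{thm3} instantly,'' and your argument simply fills in those details---using Remark \ref{rem3} (compact $=$ closed suprema via the exhaustion $\{K_n\}$) for $(I)\Leftrightarrow(II)$, and the truncation by $\mathbf{1}_{K_{n_\epsilon}}$ in the style of Lemma \ref{lem6} for $(I)\Leftrightarrow(III)$. Nothing further is needed.
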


The following description of TV sequential convergence of measures follows from the proof of \cite[Theorem 2.5]{FKZ1} essentially.
\begin{corollary}
For a sequence of measures $\{\nu_n\in\mathcal{\hat{M}}(X)\}_{n=1}^\infty$ and $\nu\in\mathcal{\hat{M}}(X)$ on a  metric space $X$,  $\nu_n\stackrel{TV}{\rightarrow}\nu$ as $n\rightarrow\infty$ if and only if
\begin{center}
$\lim_{n\rightarrow\infty}\sup_{f \mbox{ is H\"older in } C_b(X)}|\int_X f(x) d\nu_n-\int_X f(x) d\nu|=0$ 
\end{center}
\end{corollary}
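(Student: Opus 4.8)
The plan is to show that the supremum of $|\int_X f\, d\nu_n - \int_X f\, d\nu|$ over H\"older functions in $C_b(X)$ equals the supremum over the full unit ball $M_1(X)$, up to a harmless normalization; once that is done, TV convergence and the stated condition become literally the same statement via \eqref{eq24}. First I would reduce to a static comparison between two fixed measures $\mu,\nu\in\mathcal{\hat{M}}(X)$: it suffices to prove
\begin{equation}\label{eq-plan}
\sup_{f\in M_1(X)}\Big|\int_X f\, d\mu-\int_X f\, d\nu\Big|=\sup_{f\text{ is H\"older in }C_b(X),\ \|f\|_\infty\leq 1}\Big|\int_X f\, d\mu-\int_X f\, d\nu\Big|,
\end{equation}
since applying this with $\mu=\nu_n$ and then letting $n\to\infty$ turns one limit into the other. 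The inequality ``$\geq$'' in \eqref{eq-plan} is trivial because H\"older bounded functions with $\|f\|_\infty\le1$ lie in $M_1(X)$.

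For the nontrivial inequality ``$\leq$'', I would invoke \cite[Theorem 2.5]{FKZ1} (and Lemma \ref{lem2} with $\gamma=1$) exactly as in the proof of Lemma \ref{lem6}: that theorem already identifies $\sup_{A\in\mathscr{B}}|\mu(A)-\nu(A)|$ with the supremum over \emph{continuous} functions in the unit ball, so the only remaining gap is to pass from an arbitrary $f\in C_b(X)$ with $\|f\|_\infty\le 1$ to a H\"older (in fact Lipschitz) one that nearly attains the same difference. Here I would use a standard metric mollification: given $f_\epsilon\in C_b(X)$ with $\|f_\epsilon\|_\infty\le1$ realizing the continuous-function supremum up to $\epsilon$, replace it by its Lipschitz regularization
\begin{center}
$f_\epsilon^{(k)}(x):=\inf_{y\in X}\big(f_\epsilon(y)+k\rho(x,y)\big)$,
\end{center}
which is $k$-Lipschitz, satisfies $\|f_\epsilon^{(k)}\|_\infty\le 1$, and converges pointwise (indeed uniformly on the compact sets exhausting the relevant mass, by an argument like the one in Lemma \ref{lem1} using \eqref{eq3}) to $f_\epsilon$ as $k\to\infty$. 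Since $\mu,\nu$ are finite, dominated convergence gives $\int_X f_\epsilon^{(k)}\,d\mu\to\int_X f_\epsilon\,d\mu$ and likewise for $\nu$, so for $k$ large the Lipschitz function $f_\epsilon^{(k)}$ reproduces the difference up to $2\epsilon$; letting $\epsilon\to0$ yields \eqref{eq-plan}.

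The main obstacle is the pointwise-to-integral passage when $X$ is not compact: $f_\epsilon^{(k)}$ need not converge uniformly on all of $X$, so one must control the tails. I expect to handle this precisely as in Lemma \ref{lem1} and Lemma \ref{lem6}: pick a closed ball $\overline{B(x_0,n_\epsilon)}$ carrying all but $\epsilon$ of the mass of both $\mu$ and $\nu$, note that $f_\epsilon^{(k)}\to f_\epsilon$ uniformly on that ball (it is not compact in general metric spaces, but the Lipschitz regularizations of a uniformly continuous function do converge uniformly on the whole space, and $f_\epsilon$ can be taken uniformly continuous because \cite[Theorem 2.5]{FKZ1} already permits restricting to uniformly continuous test functions), and bound the contribution of the complement by $2\epsilon$ using $\|f_\epsilon^{(k)}-f_\epsilon\|_\infty\le 2$. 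Once the tails are absorbed, the rest is the routine $\epsilon$-chase carried out above, and the corollary follows by combining \eqref{eq-plan} with Theorem \ref{thm3} and the identity \eqref{eq24}.
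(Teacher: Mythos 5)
Your argument is correct, and it is in fact more complete than the paper's own justification, which consists of a single sentence: the paper simply observes that the continuous extensions $\tilde{f}_{C_1,C_2}$ constructed in the proof of \cite[Theorem 2.5]{FKZ1} (from the Hahn decomposition of $\mu-\nu$) can be chosen H\"older, and leaves the details to the reader. You reach the same reduction to \cite[Theorem 2.5]{FKZ1}, but you replace the ``inspect the explicit construction'' step by a generic mollification: the inf-convolution $f^{(k)}(x)=\inf_y\bigl(f(y)+k\rho(x,y)\bigr)$ is $k$-Lipschitz (hence H\"older), satisfies $\|f^{(k)}\|_\infty\le\|f\|_\infty$, and converges pointwise to any continuous $f$. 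This is a genuinely different and arguably cleaner mechanism, and it buys you independence from the internals of the FKZ proof. One simplification you could make: all the worry about tails, exhausting balls, and uniform convergence in your last paragraph is unnecessary. Since $\mu,\nu\in\mathcal{\hat{M}}(X)$ are finite and $|f^{(k)}|\le 1$, pointwise convergence $f^{(k)}\to f$ together with dominated convergence already gives $\int_X f^{(k)}\,d\mu\to\int_X f\,d\mu$ and likewise for $\nu$; no uniform continuity of $f$ and no ball-exhaustion argument in the style of Lemma \ref{lem1} is needed. Finally, you are right to insert the normalization $\|f\|_\infty\le 1$: as literally written the corollary takes a supremum over all H\"older functions in $C_b(X)$ with no bound, which is $0$ or $+\infty$ by scaling; the intended class is clearly the H\"older functions in $C_b(X)\cap M_1(X)$ (or $M_\gamma(X)$ for each $\gamma$, as in condition $(VII)$ of Theorem \ref{thm3}), and your proof establishes exactly that version.
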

\begin{proof}
This is because the continuous extension $\tilde{f}_{C_1,C_2}(s)$ of $f_{C_1,C_2}(s)$ in the proof of \cite[Theorem 2.5]{FKZ1} can be taken to be H\"older. Details are left to the readers.
\end{proof}

The above results on TV convergence of sequences of measures are set on $\mathcal{\hat{M}}(X)$ with the ambient space being metrizable. While the ambient space $X$ is not metrizable, these results are seldom true in general (some descriptions are not applicable in case $X$ is not metrizable). For example, considering the proof of Theorem \ref{thm5}, we have the following result.

\begin{proposition}
For $K$ being an algebraically closed field, let $X=\mathbb{A}_K^n$ (or $\mathbb{P}_K^n$) be the $n$-dimensional affine (or projective) space over $K$ for some $n\geq 2$. Let $S_n$ (or $S_{n+1}$) be sets of polynomials of $n$ (or $n+1$) variables over $K$ respectively for some $n\geq 2$. Then the affine and projective spaces equipped with the Zariski topology represented by the triples
\begin{center}
$(\mathbb{A}_K^n, S_n, K)$ (or $(\mathbb{P}_K^n, S_{n+1}, K))$
\end{center}
admits a sequence of probability measures $\{\nu_n\in\mathcal{M}(X)\}_{n=1}^\infty$ and  $\nu\in\mathcal{M}(X)$ satisfying both the conditions $(II)(III)$ in Theorem \ref{thm3}, while
\begin{center}
$\nu_n\stackrel{TV}{\nrightarrow}\nu$
\end{center}
as $n\rightarrow\infty$.
\end{proposition}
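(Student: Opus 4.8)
The plan is to mimic the proof of Theorem~\ref{thm5}, but not with the atomic measures used there: a single closed point $F=\{a_n\}$ already gives $|\nu_n(F)-\nu(F)|$ bounded away from $0$, which would violate the uniform requirement in conditions $(II)$ and $(III)$ of Theorem~\ref{thm3}. Instead I would spread the mass \emph{diffusely} along proper closed subvarieties of $X$, so that the measures stay pairwise at maximal total-variation distance while becoming uniformly indistinguishable on the bounded closed --- hence, by complementation, bounded open --- sets of the Zariski topology.

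Concretely, take $X=\mathbb{A}_K^n$ with $n\geq2$ (the projective case being identical with projective lines), and fix a countably infinite family $\ell_0,\ell_1,\ell_2,\dots$ of pairwise distinct lines in $X$; any two of them meet in a finite set, being a proper closed subset of a line $\cong\mathbb{A}_K^1$. On each $\ell_i$, whose relative Zariski topology is cofinite, there is --- provided $K$ is uncountable --- a Borel probability measure $\rho_i$ vanishing on finite sets and assigning mass $1$ to cofinite sets; transport it to $X$ by $A\mapsto\rho_i(A\cap\ell_i)$, which is countably additive on $\mathscr{B}$ because intersecting with $\ell_i$ commutes with countable Boolean operations and carries $\mathscr{B}$ into the cofinite $\sigma$-algebra of $\ell_i$. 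Put $\nu:=\rho_0$ and $\nu_n:=\rho_n$, viewed now as Borel probability measures on $X$. Reading ``bounded'' so that the bounded closed sets are the finite ones --- a natural reading in the absence of a metric --- every bounded closed $F\subset X$ has $F\cap\ell_i$ finite, hence $\rho_i$-null, for each $i$, so $\nu_n(F)=\rho_n(F\cap\ell_n)=0=\rho_0(F\cap\ell_0)=\nu(F)$; thus $\sup_F|\nu_n(F)-\nu(F)|=0$, giving $(II)$, and $(III)$ follows by passing to complements (a bounded open set is cofinite, of full $\rho_i$-mass). On the other hand $\ell_0\in\mathscr{B}$ with $\nu(\ell_0)=1$ and $\nu_n(\ell_0)=\rho_n(\ell_0\cap\ell_n)=0$ since $\ell_0\cap\ell_n$ is finite, whence $\Vert\nu_n-\nu\Vert_{TV}=2$ for every $n$ and $\nu_n\stackrel{TV}{\nrightarrow}\nu$. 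Here the lines $\{\ell_i\}$ play the role of the disjoint closed sets $\{A_i\}$ in Theorem~\ref{thm5}, and $\ell_0$ that of their union.

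The step I expect to be the main obstacle is fixing, and defending, the interpretation of ``bounded'' in conditions $(II)$ and $(III)$ once the ambient space carries the non-metrizable Zariski topology rather than a metric --- equivalently, checking that each closed set that ought to count as bounded is a proper, lower-dimensional subvariety, which a generic line then meets in a $\rho_i$-null set. This is precisely where the structure of the Zariski topology on $\mathbb{A}_K^n$ and $\mathbb{P}_K^n$ (for $n\geq2$) is used, in the same spirit as the hypotheses on disjoint closed sets in the proofs of Theorem~\ref{thm5} and Corollary~\ref{cor1}; the only additional technical ingredient is the existence of the diffuse measures $\rho_i$, for which it is convenient to take $K$ uncountable.
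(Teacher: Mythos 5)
Your proposal takes a genuinely different route from the paper, and your opening criticism of the ``obvious'' route is well founded. The paper offers no argument beyond the phrase ``considering the proof of Theorem~\ref{thm5}'', i.e.\ it silently reuses the atomic measures $\nu_n=\frac{n-1}{n}\delta_{a_n}+\frac{1}{n}\delta_{a_*}$, $\nu=\delta_{a_*}$. As you observe, those measures address at best the \emph{pointwise} conditions $(II)$, $(III)$ of Theorem~\ref{thm4}; they cannot satisfy the \emph{uniform} conditions $(II)$, $(III)$ of Theorem~\ref{thm3}, because the singleton $\{a_n\}$ is Zariski-closed and bounded under any sensible convention, and $|\nu_n(\{a_n\})-\nu(\{a_n\})|=\frac{n-1}{n}\rightarrow 1$. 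Your replacement --- diffuse measures $\rho_i$ carried by pairwise distinct lines $\ell_i$, each $\rho_i$ annihilating every proper closed subset of its line --- removes exactly this obstruction: every finite closed set is null for all of $\nu,\nu_1,\nu_2,\dots$, while the closed set $\ell_0$ witnesses $\Vert\nu_n-\nu\Vert_{TV}=2$. This is a repair of, not a paraphrase of, what the paper supplies.

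Two caveats should be promoted from asides to hypotheses. First, the diffuse $\rho_i$ (the $0$--$1$ measure on the countable--cocountable $\sigma$-algebra of $\ell_i$) exists only when $K$ is uncountable; over a countable algebraically closed field such as $\overline{\mathbb{Q}}$ no Borel probability measure on a Zariski line can vanish on all finite sets, so ``$K$ uncountable'' must be added to the statement for your construction to go through. Second, as you yourself identify, everything hinges on what ``bounded'' means in conditions $(II)$, $(III)$ of Theorem~\ref{thm3} once the metric is gone. Under your reading (bounded closed $=$ finite) the suprema in $(II)$, $(III)$ vanish identically and the claim holds; under the equally defensible reading that every subset of the quasi-compact space $\mathbb{A}^n_K$ is bounded, your example fails as well, since $\ell_n$ is closed with $|\nu_n(\ell_n)-\nu(\ell_n)|=1$. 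That ambiguity is a defect of the proposition's statement rather than of your argument, but a written proof must fix the convention explicitly --- the paper does not, and its own construction fails under every convention that counts singletons as bounded.
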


\section{Attainability of the TV metric between two measures and further sequential convergence in $\mathcal{\hat{M}}(X)$}

Comparing Theorem \ref{thm6}, Example \ref{exm2} and Theorem \ref{thm3}, we can see that there is some fracture between descriptions of these successive modes of convergence. Due to the Hahn decomposition, the total-variation distance of two finite measures can be attained on some bounded measurable function or on some Borel measurable set in (\ref{eq24}). Although  Theorem \ref{thm3} reduces the difficulty on checking TV convergence of sequences of measures in $\mathcal{\hat{M}}(X)$ in some cases, the total-variation metric between two measures may never be attained.

\begin{example}\label{exm4}
Let $X=(0,1)$. Let 
\begin{center}
$\mu=\frac{1}{2}\mathfrak{L}_1|_{(0,1)}+\frac{1}{2}\delta_{\frac{2}{3}}$ 
\end{center}
and 
\begin{center}
$\nu=\mathfrak{L}_1|_{(0,\frac{1}{3})}+\mathfrak{L}_1|_{(\frac{2}{3},1)}+\frac{1}{3}\delta_{\frac{1}{3}}$
\end{center}
be two Borel probabilities on $X$.
\end{example}

One can check that 
\begin{center}
$\Vert\mu-\nu\Vert_{TV}=\frac{2}{3}$
\end{center}
in Example \ref{exm4}. However, there does not exist any open (or closed) set $B$ such that  
\begin{center}
$2|\mu(B)-\nu(B)|=\frac{2}{3}$, 
\end{center}
or any continuous function $f$ such that 
\begin{center}
$|\int_X fd\mu-\int_X fd\nu|=\frac{2}{3}$. 
\end{center}

One should also be careful that although there are many equivalent ways to formulate the total-variation metric between two finite measures on metric spaces (\cite[Theorem 2.5]{FKZ1}, Lemma \ref{lem1}, Lemma \ref{lem6}), the attainability of the total-variation metric between two measures may vary depending on situations. For example, there are examples of unbounded metric spaces $X$ with finite Borel measures $\mu, \nu\in \mathcal{\hat{M}}(X)$ such that    
\begin{equation}\label{eq21}
\Vert\mu-\nu\Vert_{TV}=2|\mu(B)-\nu(B)|
\end{equation}
for some open (or closed) $B\subset X$, while there does not exist any bounded open (or closed) $B\subset X$ satisfying (\ref{eq21}), or any continuous function $f$ such that 
\begin{center}
$|\int_X fd\mu-\int_X fd\nu|=\Vert\mu-\nu\Vert_{TV}$. 
\end{center}

It is possible that any of the four kinds of sequential convergence is still not subtle enough in its applications to real problems. In this case one can define the $F$-convergence and $S$-convergence for 
sequences of measures in $\mathcal{\tilde{M}}(X)$.

\begin{definition}
Let $F$ be a family of functions from $X$ to $\mathbb{R}$.  For a sequence of measures $\{\nu_n\in\mathcal{\tilde{M}}(X)\}_{n=1}^\infty$, we say $\{\nu_n\}_{n=1}^\infty$ $F$-converges to $\nu\in\mathcal{\tilde{M}}(X)$, denoted by $\nu_n\stackrel{F}{\rightarrow}\nu$, if
\begin{center}
$\lim_{n\rightarrow\infty }\int_X f(x) d\nu_n=\int_X f(x) d\nu$
\end{center}
for any  $f\in F$.
\end{definition}

\begin{definition}
Let $S\subset \mathscr{B}$ be a family of sets.  For a sequence of measures $\{\nu_n\in\mathcal{\tilde{M}}(X)\}_{n=1}^\infty$, we say $\{\nu_n\}_{n=1}^\infty$ $S$-converges to $\nu\in\mathcal{\tilde{M}}(X)$, denoted by $\nu_n\stackrel{S}{\rightarrow}\nu$, if
\begin{center}
$\lim_{n\rightarrow\infty } \nu_n(A)=\nu(A)$
\end{center}
for any  $A\in S$.
\end{definition}

Topologies induced from delicate choices of $F$ or $S$ may result in solutions of ones' problems instantly.

\footnotesize

\bigskip\bigskip

\small

Address: Department of Mathematics, Binzhou University, Huanghe 5th Road No. 391, Binzhou 256600, Shandong, P. R. China

\bigskip 

Email:  maliangang000@163.com

\end{document}